\newtheorem{theorem}{Theorem}
\newtheorem{lemma}[theorem]{Lemma}
\newtheorem{remark}[theorem]{Remark}
\newtheorem{definition}[theorem]{Definition}
\newcommand{\pr}[1]{\mathbb{P}\!\left(#1\right)}
\newcommand{\prcond}[2]{\mathbb{P}\!\left(#1\;\middle\vert\;#2\right)}
\newenvironment{claim}[1]{\par\noindent\underline{Claim:}\space#1}{}
\title{Random walks on hyperplane arrangements and stopping times}
\date{}
\author{Evita Nestoridi}
\begin{document}
\maketitle
\begin{abstract}
Consider a real hyperplane arrangement and let $\mathcal{C}$ denote the occurring chambers. Bidigare, Hanlon and Rockmore introduced a Markov chain on $\mathcal{C}$ which is a generalization of some card shuffling models used in computer science, biology and card games. This paper introduces strong stationary arguments for this Markov chain, which provide explicit bounds for the separation distance.
\end{abstract}

\section{Introduction}
Consider the following process on a finite, transitive graph: pick a vertex at random and flip a fair coin to determine whether to color this vertex and its neighbors red or blue. Now, consider the following card shuffling scheme: enumerate the subsets of $\{1,2,\ldots,n \}$ and assign weight $w_i$ to the $i^{th}$ subset. Pick a subset of $\{1,2,\ldots,n \}$ according to $w$ and move the cards indicated by that set to the top keeping their relative order.  This is a generalization of the riffle shuffles, called  the pop shuffles model. It turns out that these two processes are quite similar: they both are  Markov processes  on the chambers of some hyperplane arrangement.

A very special case of the second example  is the Tsetlin library or (weighted) random to top card shuffling: consider a collection of books (or cards), labeled $1$ through $n$. Pick a book $i$ with probability $w_i$ and move it to the front. This is a very well studied Markov chain mainly because of its use in dynamic file maintenance and cache maintenance (\cite{Do}, \cite{FHo}, \cite{Phatarfod}). The eigenvalues of this process were discovered independently by Donnelly \cite{Do}, Kapoor and Reingold \cite{KR}, and Phatarfod \cite{Phatarfod} .

Most of the processes on graphs of this type are viewed as Markov chains on the chambers of the Boolean arrangement. The card shuffling schemes mentioned above are treated as  Markov chains on the chambers of the braid arrangement.  Examples of card shuffling, hypercube walks and coloring processes are studied thoroughly in Sections \ref{braid} and \ref{examples}.

The unifying picture is the following: let $\mathcal{A}$ be a finite collection of affine hyperplanes in $V= \mathbb{R}^n$ which is called a hyperplane arrangement.  These hyperplanes cut $V$ in finitely many connected, open components that are called chambers. The chambers are finite intersections of half-spaces and therefore they have faces.

To define the chambers and the faces of a hyperplane arrangement, notice that a hyperplane cuts the space into two half spaces, call one of them positive and the other one negative. A chamber can be specified by keeping track for every hyperplane of whether it is on the positive or negative half space of the hyperlane.  Let $m$ be the number of hyperplanes in  $\mathcal{A}$. A chamber can be expressed as a vector with $m$ coordinates, each one of them is either $+$ or $-$. A face can also be viewed as vector with $m$ coordinates but this time the coordinates can also be zero, if the face lies on the hyperplane.

 Let $\mathcal{F}$ be the set of all faces and $\mathcal{C}$ be the set of all chambers. The following hyperplane arrangement in $\mathbb{R}^2$ contains $7$ chambers and $19$ faces (chambers, edges and points):

\centerline{
\begin{tikzpicture}
\draw (0,1) -- (4,-2) node[anchor=north west] {$F$}; 
\draw (3,2.5) -- (3,-2.5);
\draw (0,-2) -- (4,2) ;
\draw (3.5,2.3)  node{$C_1$} ;
\draw (4,0.5)  node{$C_2$} ;
\draw (1.5,1)  node{$C_0$};
\draw (3.5,-2)  node{$C_3$} ;
\draw (2,-2)  node{$C_4$} ;
\draw (0.5,-0.5)  node{$C_5$} ;
\draw (2.5,-0.5)  node{$C_6$} ;
\end{tikzpicture}}
$$\mbox{Figure }1$$

There is a notion of product between a face $F$ and a chamber $C$. The result will be the unique chamber which is the nearest to $C$ (in the sense of crossing the fewest number of hyperplanes) and has $F$ as a face, in other words faces act on chambers in the above way. The product $FC$ is called the projection of $C$ on $F$. For example, in figure $1$ the product of $C_0$ with $F$ is $C_2$. The product of faces is defined more carefully in section \ref{hyp} where it is shown to have the following associative property:
$$F(GC)= (FG)C$$
for all $F,G \in \mathcal{F}$ and $C \in \mathcal{C}$. The rigorous algebraic definition of the product is introduced in section \ref{hyp}.

Bidigare, Hanlon and Rockmore (BHR) \cite{BHR} defined a random walk on $\mathcal{C}$ using the above action of  $\mathcal{F}$ on $\mathcal{C}$ and characterized its eigenvalues. Starting with a probability measure $w$ on $\mathcal{F}$, a step in the walk is the following: from $C \in \mathcal{C}$, choose $F$ according to $w$ and move to $FC$. Denote by $C^t$ the $t^{th}$ configuration of the walk, that is the chamber the walk is on after $t$ steps of running the process. Then,
$$C^t=F^{t} \ldots F^{2}F^{1}C_0$$
where $F^i$ denotes the face picked at time $i$.

Brown and Diaconis \cite{BD} proved that that the transition matrix $K$ of this Markov Chain is diagonalizable and they reproved the BHR result. They also found a necessary and sufficient condition on $w$ so that $K$ has a unique stationary distribution. This condition is that $w$ separates the hyperplanes of $\mathcal{A}$, namely for every $H \in \mathcal{A}$ there is a face $F \nsubseteq H$ such that $w(F)>0$. Under that assumption, they provide a stochastic description for the stationary measure $\pi$: sample without replacement from $w$ and apply these faces in inverse order to any starting chamber (this way the first chosen face is the last to be applied). 
In this paper, $w$ is assumed to be separating, so that there exists a notion of convergence to this unique distribution. Athanasiadis and Diaconis have a similar discussion in \cite{AthD}, but they use purely combinatorial methods as well as a coupling argument.

The approach of this paper is more probabilistic. It involves a strong stationary time argument. It thus gives stronger bounds that previous methods; bounds in separation distance, which is defined as:
$$s(t)= \max_{x_0 \in \mathcal{C}} \left( 1-  \min_{x \in \mathcal{C}} \frac{K_{x_0}^{*t}(x)}{\pi(x)} \right)$$
where $K_{x_0}^{*t}(x)$ denotes the probability of starting the process at $x_0$ and moving to $x$ after $t$ steps.
 To state the result consider the following definition:
\begin{definition}
Let $F,G$ be two faces and denote by $I_F=\{H \in \mathcal{A}: F \subset H\}$. Then $F$ and $G$ are called adjacent if $$I_F=I_G$$
\end{definition}
This way the space $\mathcal{F}$ is partitioned in blocks $B_i$ each one consisting only of faces adjacent to the $i^{th}$ hyperplane. Let $$w(B_j)= \sum_{F \in B_j} w(F)$$ then the first new result of this paper states

\begin{theorem}\label{hyperplane}
Let $\mathcal{A}$ be a hyperplane arrangement and $w$ the measure on $\mathcal{F}$. If $K$ is the transition matrix of the Markov Chain described above then
$$s(t) \leq \sum_j (1-w(B_j))^t$$
where the sum is taken over all blocks of positive weight.
\end{theorem}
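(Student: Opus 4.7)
The strategy is to build a strong stationary time $T$ for the BHR walk and invoke the standard inequality $s(t)\le \pr{T>t}$.

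I would define
$$T \;=\; \min\bigl\{t \ge 0 : \text{for every } j \text{ with } w(B_j)>0,\ \exists\, k\le t \text{ with } F^k \in B_j\bigr\},$$
the first time every block of positive weight has been visited by the walk. A union bound over blocks then gives
$$\pr{T>t} \;\le\; \sum_{j:\, w(B_j)>0} (1-w(B_j))^t,$$
which matches the right-hand side of the theorem. So the entire content of the proof is in verifying that $T$ is a strong stationary time.

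The key structural input is the sign-vector formula for the face action: $(FC)_i = F_i$ when $F\not\subset H_i$, and $(FC)_i = C_i$ otherwise. Iterating, the $i$-th sign coordinate of $C^t = F^tF^{t-1}\cdots F^1 C_0$ equals the $i$-th coordinate of the most recently applied $F^k$ ($k\le t$) with $F^k\not\subset H_i$. Since $w$ is separating, for every hyperplane $H_i$ there is a positive-weight block $B_j$ none of whose faces lies in $H_i$; hence by time $T$ every coordinate has been overwritten at least once, so $C^T$ does not depend on the starting chamber $C_0$. This shows that $T$ is a randomized stopping time after which the initial condition has been forgotten.

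To identify the conditional law of $C^T$ (given $T=t$) with $\pi$, I would use the Brown--Diaconis stochastic description: $\pi$ is the law of $G_1G_2\cdots G_N C_0$, where $(G_1,\dots,G_N)$ is a sample without replacement from $w$ and the faces are applied in reverse order. Condition on the ordered sequence of blocks visited by $F^1,\dots,F^T$; then, inside each block $B_j$, the face picked at step $k$ is an independent draw from $w$ restricted and normalized to $B_j$, and by the sign-vector formula above only the \emph{last} visit to each block among $F^T,F^{T-1},\dots,F^1$ contributes to $C^T$. Writing $C^T$ in terms of these last visits, and using idempotency $FF=F$ together with the associative law $F(GC)=(FG)C$ to collapse the intermediate factors, one exhibits a measure-preserving bijection between the trace $(F^1,\dots,F^T)$ and the initial segment of a Brown--Diaconis without-replacement sample that suffices to determine a chamber. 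The main obstacle, and where I expect to spend the most care, is making this bijection explicit and checking that the induced law of $C^T$ is exactly $\pi$ and is independent of the value of $T$; once that is done, the standard inequality $s(t)\le\pr{T>t}$ combined with the union bound above yields the theorem.
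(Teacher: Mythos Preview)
Your overall strategy matches the paper's exactly: the stopping time $T$ you define is precisely the paper's $T_2$ (Section~\ref{second}), and the union bound over blocks is the one-line argument of Section~\ref{pf}. All the content is in showing that $T$ is a strong stationary time, as you recognize.

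Here your sketch and the paper diverge. The paper does \emph{not} argue via last visits and a bijection. Instead it proves, by induction on the number $i$ of distinct faces seen so far (the argument of Lemma~\ref{T1}, reused verbatim in Section~\ref{second}), that conditional on the set $\{F_1,\ldots,F_i\}$ of faces picked and on $T_i\le t<T_{i+1}$, the law of $C^t$ is the measure $w_{F_1,\ldots,F_i}$ obtained by sampling without replacement from $\{F_1,\ldots,F_i\}$ and multiplying in reverse. It then invokes a separate characterization of $\pi$ (Lemma~\ref{char}: sample from $w$ until every block is represented, then multiply) to conclude that once all blocks are represented the conditional law equals $\pi$.

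Your ``last visit to each block'' reduction is correct---if $F,F'$ lie in the same block then $F\cdot X\cdot F'\cdot Y=F\cdot X\cdot Y$, since $F$ and $F'$ have the same nonzero coordinate set---and it gives a clean reason why $C^T$ forgets $C_0$. But the ``measure-preserving bijection'' you gesture at for identifying the conditional law with $\pi$ is not straightforward: the Brown--Diaconis description samples \emph{faces} without replacement, whereas your reduced product has one face per \emph{block}, in an order determined by the random last-visit times under the conditioning $\{T=t\}$. To finish you must compute the joint law of (block ordering, face chosen in each block) under that conditioning and match it to $\pi$; this is essentially what the paper's induction together with Lemma~\ref{char} accomplishes, and I do not see that the bijection idea shortcuts it. So the plan is sound, but for the step you flag as the main obstacle you should expect to carry out something close to the paper's inductive argument rather than a direct bijection.
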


In particular, for the Tsetlin library described above, Theorem \ref{hyperplane} says that
\begin{theorem}\label{wr}
$$ s(t)\leq \sum^n_{i=1}(1-w(i))^t$$
where $w(i)$ is the weight of the $i^{th} $ card.
\end{theorem}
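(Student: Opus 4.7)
The plan is to exhibit the (weighted) Tsetlin library as a special case of the hyperplane walk of Theorem \ref{hyperplane}, applied to the braid arrangement, and then to compute the relevant blocks.

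First, I would recall the braid arrangement $\mathcal{A}_n$ in $\mathbb{R}^n$, whose hyperplanes are $H_{jk}=\{x\in\mathbb{R}^n: x_j=x_k\}$ for $1\le j<k\le n$. Its chambers are indexed by the orderings of coordinates (equivalently, permutations of $\{1,\ldots,n\}$), and its faces by ordered set partitions of $\{1,\ldots,n\}$. The product $F\cdot C$ of a face (an ordered set partition) with a chamber (a linear order) yields the linear order obtained by refining the blocks of $F$ using the order prescribed by $C$. In particular, if $F_i$ denotes the face given by the ordered partition $\bigl(\{i\},\{1,\ldots,n\}\setminus\{i\}\bigr)$, then $F_i\cdot C$ is precisely the ordering obtained from $C$ by moving card $i$ to the top. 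Putting weight $w(F_i)=w(i)$ and zero weight on all other faces therefore recovers the Tsetlin library as the BHR chain.

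Next, I would identify the blocks $B_j$ of positive weight under the adjacency relation $I_F=I_G$, where $I_F=\{H\in\mathcal{A}_n:F\subset H\}$. For the face $F_i$, we have
$$I_{F_i}=\bigl\{H_{jk}: j,k\in\{1,\ldots,n\}\setminus\{i\},\ j<k\bigr\},$$
since the condition $x_j=x_k$ is forced on the block $\{1,\ldots,n\}\setminus\{i\}$ but not across blocks. For $i\ne i'$ (and $n\ge 2$) any hyperplane $H_{ik}$ with $k\notin\{i,i'\}$ lies in $I_{F_{i'}}$ but not in $I_{F_i}$ (with the case $n=2$ being immediate), so $F_i$ and $F_{i'}$ are not adjacent. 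Hence each face $F_i$ of positive weight lies in its own block, and $w(B_i)=w(F_i)=w(i)$.

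Finally, I would invoke Theorem \ref{hyperplane} directly: the assumption that $w$ is separating becomes the mild condition that $w(j)>0$ or $w(k)>0$ for every pair, which is harmless for the bound, and the conclusion
$$s(t)\le \sum_{j}(1-w(B_j))^t=\sum_{i=1}^n(1-w(i))^t$$
follows. The only subtlety worth watching is the bookkeeping between faces and ordered set partitions, and the verification that faces $F_i, F_{i'}$ with $i\ne i'$ are genuinely non-adjacent; beyond that, the statement is an immediate corollary.
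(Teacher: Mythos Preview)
Your argument is correct for $n\ge 3$ and follows the route the paper advertises in the introduction---deducing Theorem~\ref{wr} from Theorem~\ref{hyperplane} by identifying the blocks---whereas the paper's written proof in Section~\ref{braid} instead gives a self-contained strong stationary time argument: it proves directly that $T$, the first time every card has been touched, is a strong stationary time for the Tsetlin library, and then applies the union bound $\pr{T>t}\le\sum_{i=1}^n(1-w(i))^t$. The two approaches coincide under the hood, since the strong stationary time $T_2$ behind Theorem~\ref{hyperplane} specializes precisely to $T$ once you know the positive-weight blocks are the singletons $\{F_i\}$; your route saves reproving the stationary-time lemma at the cost of the block computation.

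One small wrinkle: for $n=2$ the faces $F_1=(\{1\},\{2\})$ and $F_2=(\{2\},\{1\})$ are the two chambers themselves, so $I_{F_1}=I_{F_2}=\emptyset$ and they \emph{are} adjacent, contrary to your parenthetical. In that case Theorem~\ref{hyperplane} yields the single block of weight $1$ and hence the stronger bound $s(t)\le 0$, so the stated inequality still holds trivially; but your non-adjacency claim should be restricted to $n\ge 3$.
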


Theorem \ref{wr} gives the correct answer for the mixing time if the weights are all equal to $1/n$, which is $n \log n +cn$. 
Yet in some cases, such as the riffle shuffles,  Theorem \ref{hyperplane} does not give such accurate answers. Section \ref{symmetry} examines a special case where the following symmetry condition is required: assume that a group $G$ acts on $V$ preserving the hyperplane arrangement $\mathcal{A}$ so that the action restricted on the chambers is transitive. If for $F,L \in \mathcal{F}$ there is $g\in G$ such that 
\begin{equation}\label{condition}
F=gL \mbox{ then we require that } w(F)=w(L)
\end{equation} 
In this case the result is
\begin{theorem}\label{sym.con}
Under the symmetry conditions,
$$s(t)\leq \sum^m_{i=1}\left(1 - \sum_{\substack{F \in \mathcal{F}\\ F \notin H_i}}w(F) \right)^t$$
\end{theorem}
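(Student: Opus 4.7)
The plan is to exhibit, for each $t$, a natural $G$-invariant ``good event'' $A_t$ depending only on the sampled faces $F^1,\dots,F^t$, such that on $A_t$ the walk is already distributed according to $\pi$ and is independent of the starting chamber; then $s(t)\leq \mathbb{P}(A_t^c)$ follows from a standard coupling-style decomposition, and a union bound over hyperplanes gives the stated inequality.

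Concretely, let
$$A_t=\bigl\{\text{for every } H\in\mathcal{A} \text{ there exists } i\leq t \text{ with } F^i\not\subset H\bigr\}.$$
First I would check, by induction on $t$ together with the definition of the projection $FC$, that the half-space side of $F^t\cdots F^1 C_0$ relative to any hyperplane $H$ is determined by the largest index $i$ with $F^i\not\subset H$ (or by $C_0$ when no such $i$ exists). Consequently, whenever $A_t$ holds, $C^t$ is a function $\phi(F^1,\dots,F^t)$ of the sample alone, independent of $C_0$.

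Next I would invoke the symmetry hypothesis three times: (i) the i.i.d.\ sampling law on $(F^1,\dots,F^t)$ is $G$-invariant because $w(gF)=w(F)$; (ii) the event $A_t$ is $G$-invariant because $G$ permutes the hyperplanes of $\mathcal{A}$; and (iii) the map $\phi$ is $G$-equivariant, which follows from the identity $g(FC)=(gF)(gC)$ together with the fact that on $A_t$ the value of $\phi$ does not depend on the starting chamber. These three facts force the conditional law of $C^t$ given $A_t$ to be a $G$-invariant probability measure on $\mathcal{C}$, which by transitivity of the $G$-action equals the uniform distribution on $\mathcal{C}$. The same $G$-equivariance applied to $K$, combined with uniqueness of the stationary measure, shows that $\pi$ itself is uniform on $\mathcal{C}$, so that $C^t\mid A_t\sim\pi$.

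Since $\mathbb{P}(A_t)$ and the conditional law $\mathbb{P}(C^t=\cdot\mid A_t)$ do not depend on $C_0$, one obtains
$$K^{*t}_{C_0}(x)\geq \mathbb{P}(A_t)\,\pi(x)$$
for every $x,C_0\in\mathcal{C}$, hence $s(t)\leq \mathbb{P}(A_t^c)$. A union bound over the $m$ hyperplanes yields
$$\mathbb{P}(A_t^c)\leq \sum_{i=1}^m \mathbb{P}\bigl(F^j\subset H_i\text{ for all }j\leq t\bigr)=\sum_{i=1}^m\left(1-\sum_{F\not\subset H_i}w(F)\right)^t,$$
which is exactly the claimed bound. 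The step I anticipate as the main obstacle is the verification of $G$-equivariance of $\phi$ on the event $A_t$ (and the underlying inductive structure of the projection formula that makes $\phi$ well defined); once that is in place, the remainder is $G$-invariance bookkeeping together with a coupon-collector style union bound.
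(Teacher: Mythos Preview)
Your proposal is correct and follows essentially the same approach as the paper: your event $A_t$ is precisely $\{T_3\leq t\}$, where $T_3$ is the paper's strong stationary time (the first time the product of sampled faces is a chamber), and your $G$-equivariance argument that the conditional law of $C^t$ on $A_t$ is uniform is exactly the content of the paper's Lemma~\ref{T3}. The final union bound over hyperplanes is identical to the paper's, so the only difference is cosmetic---you bypass the general strong-stationary-time inequality by writing the decomposition $K^{*t}_{C_0}(x)\geq \mathbb{P}(A_t)\pi(x)$ directly.
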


A few important examples can be found in sections \ref{braid} and \ref{examples}. Section \ref{hyp} includes the setup for the strong stationary time, as well as the proof that it is a strong stationary time indeed. Finally, Section \ref{pf} gives the details of the proof of Theorem \ref{hyperplane}. 

\begin{remark}
This paper provides only the basic information around hyperplane arrangements that is needed for the setup of the problem and the proof of the results. The reader is encouraged to learn more about hyperplane arrangements by reading \cite{Stanley}.

\end{remark}

\section{Strong stationary times}
The main results of this paper are proven using strong stationary times. Diaconis and Aldous \cite{A-P} introduced the following definition;
\begin{definition}
Fix $x_0\in X$. A strong stationary time is a stopping time $\tau$ such that for every $A \subset X$ and $k\geq 0$ it holds that
$$\mathbb{P}_{x_0 }\left(X_k \in A \vert \tau \leq k\right)= \pi(A)$$
where $X_k$ is the state that the Markov Chain is at time $k$.
\end{definition}
Aldous and Diaconis \cite{A-P} proved the following theorem which is the main link between strong stationary times and separation distance:

\begin{lemma}\label{inequality}
If $\tau$ is a strong stationary time then for $t>0$,
$$s(t) \leq \pr{\tau > t}$$
\end{lemma}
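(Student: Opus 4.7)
The plan is to unwrap the definition of separation distance and reduce the inequality to a pointwise estimate on the transition probabilities. Writing $s(t)$ as
\[
s(t) = \max_{x_0 \in \mathcal{C}} \max_{x \in \mathcal{C}} \left( 1 - \frac{K_{x_0}^{*t}(x)}{\pi(x)} \right),
\]
it suffices to fix an arbitrary starting chamber $x_0$ and target chamber $x$ and prove
\[
K_{x_0}^{*t}(x) \geq \pi(x)\, \pr{\tau \leq t},
\]
since this rearranges to $1 - K_{x_0}^{*t}(x)/\pi(x) \leq \pr{\tau > t}$, and the bound on $s(t)$ follows by taking the maximum.

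To prove the pointwise inequality, I would decompose $\pr{X_t = x}$ according to whether $\tau$ has occurred by time $t$:
\[
K_{x_0}^{*t}(x) = \pr{X_t = x} \geq \pr{X_t = x,\ \tau \leq t} = \pr{X_t = x \mid \tau \leq t}\, \pr{\tau \leq t},
\]
where probabilities are computed with respect to $\mathbb{P}_{x_0}$. Applying the strong stationary time property to the singleton $A = \{x\}$ gives $\pr{X_t = x \mid \tau \leq t} = \pi(x)$, and the desired inequality drops out.

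The only delicate point is the direct use of the strong stationary time definition for a singleton event at the deterministic time $t$; this is legitimate because the definition is stated for arbitrary $A \subset \mathcal{C}$ and arbitrary $k \geq 0$. After that, the argument is a one-line conditioning identity, and the maximization over $x_0$ and $x$ transfers the pointwise bound to $s(t) \leq \pr{\tau > t}$.
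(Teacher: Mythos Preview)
Your argument is correct and is the standard proof of this fact. Note, however, that the paper does not actually supply its own proof of this lemma: it is stated with attribution to Aldous and Diaconis \cite{A-P}, and no argument is given in the text. What you have written is essentially the classical one-line conditioning argument from that reference, so there is nothing to compare beyond observing that you have filled in a proof the paper chose to cite rather than reproduce.

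One small remark on presentation: the paper's definition of a strong stationary time fixes a starting state $x_0$, so strictly speaking your inequality $K_{x_0}^{*t}(x) \geq \pi(x)\,\mathbb{P}_{x_0}(\tau \leq t)$ bounds the separation from $x_0$ by $\mathbb{P}_{x_0}(\tau > t)$. Passing to the maximum over $x_0$ in the definition of $s(t)$ then requires either a strong stationary time for each starting state or --- as is the case throughout this paper --- a stopping time whose law does not depend on $x_0$ (the times $T_1$, $T_2$, $T_3$ depend only on the sequence of faces drawn). You implicitly use this when you ``take the maximum'' at the end; it would be worth saying so explicitly.
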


\section{Braid Arrangement and Pop shuffles.}\label{braid}
Shuffling schemes can be viewed as a Markov chain on the chambers of the braid arrangement. As presented in detail later, the chamber's of this arrangement are indexed by permutations and the faces are indexed by ordered block partitions. The following scheme is an example of such a Markov chain:

Consider all ordered block partitions of $[n]= \{1,2,\ldots n \}$ and assign weights to them. The card shuffling suggests to pick an ordered block partition $A_1, A_2,\ldots A_m$ according to the weights and remove from the deck the cards indicated by $A_1$ and put them on the top, keeping their relative order fixed. Then put the cards indicated by $A_2$ and put them exactly below the $A_1$ cards, keeping their relative order fixed and so on. This card shuffling is known as the pop shuffle.

This  shuffling  scheme is a Markov Chain on the chambers of a specific hyperplane arrangement. In particular consider the hyperplanes 
\begin{equation}
x_i=x_j
\end{equation}
The following pictures represent the braid arrangement for $\mathbb{R}^2$ and  $\mathbb{R}^3$ respectively, where in $\mathbb{R}^3$ the lines drawn correspond to planes:

\centerline{
\begin{tikzpicture}
\draw (0,2) -- (4,-2) node[anchor=north west] {$x_1=x_2$}; 
\draw (2,2.5) -- (2,-2.5) node[anchor=north west] {$x_1=x_3$};
\draw (0,-2) -- (4,2)node[anchor=north west] {$x_3=x_2$};
\draw (4,0)  node{$x_1<x_2<x_3$} ;
\draw (3,2)  node{$\substack{x_1<\\ x_3<x_2}$} ;
\draw (1,2)  node{$\substack{x_3<\\ x_1<x_2}$} ;
\draw (0,0)  node{$x_3<x_2<x_1$};
\draw (1.5,-1.5)  node{$\substack{x_2<\\ x_3<x_1}$} ;
\draw (2.7,-1.5)  node{$\substack{x_2<\\ x_1<x_3}$} ;
\draw (-2,2) -- (-6,-2) node[anchor=north west] {$x_1=x_2$};
\draw (-3,-1)  node{$x_1>x_2$} ;
\draw (-6,0)  node{$x_2>x_1$};
\end{tikzpicture}}

Then the chambers are in one to one correspondence with $S_n$.
That is because in the interior of a chamber none of the coordinates are equal to each other and in fact the ordering of the coordinates is fixed. For example,  the chamber  that corresponds to $\sigma \in S_b$ is
\begin{equation}
x_{\sigma(n)} < x_{\sigma(n-1)}<\ldots x_{\sigma(1)}
\end{equation}

The faces are exactly the ordered partitions of $[n]$, meaning that some of the coordinates are equal, forming these way blocks that are ordered. For example,
\begin{equation*}
\{1,2,3\}\{4,5\} \{6,7,\ldots n\}
\end{equation*} 
correspond to 
\begin{align*}
 x_1=x_2=x_3, x_4=x_5 &, x_6=x_7=\ldots = x_n \\
  x_1< & x_4<x_6
\end{align*}
Therefore, for this Markov Chain Theorem \ref{hyperplane} says that
$$s(t) \leq \sum (1-w(B_j))^t$$
but if we add the symmetry conditions of $T_3$ then we might be able to get better bounds.

\paragraph{Weighted subsets Markov Chain.}
Label the cards of a deck with the numbers $1,2\ldots n$ from top to end. Let $S_i$ be any subset of $\{1,2, \ldots n \}$ and $w_i$ the weight assigned to $S_i$ for $i=1,2\ldots nL$, where some of the $w_i$ are allowed to be zero. The only condition on the $w_i'$s is that for every $i,j \in \{1,2,3,\ldots n \}$ there is a subset $A$ of $1,2\ldots n$ with positive weight such that $i \in A$, $j \notin A $. 

 The Markov chain of this section picks a subset $S_j$ with probability $w_j$ and  then look at the deck of cards: remove the cards whose assigned number is in $S_j$ and move them to the top of the deck keeping their previous relative order. The stationary measure is sampling without replacement according to the $w_j'$s and perform the sorting on the deck of cards starting from the last subset picked.

During the Markov Chain process, $i,j \in \{1,2,3,\ldots n \}$ have been separated if  at least once we have picked a subset of $\{1,2,3,\ldots n \}$ which contains only one of $i,j$. 

In this case, Theorem \ref{hyperplane} says the following:
$$s(t) \leq \sum (1-w_j)^t$$
which is proven again by the same strong stationary time argument. Let $T$ be the first time that all subsets of $\{1,2,\ldots n\}$ with positive weight have been picked. Notice that if time $T$ has occurred then all pairs $i,j$ have been separated.  This $T$ is known to be a coupling time due to work of Athanasiadis and Diaconis \cite{AthD}.
\begin{lemma}\label{st}
$T$ is a strong stationary time.
\end{lemma}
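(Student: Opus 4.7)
The plan is to verify the definition of a strong stationary time directly: for every $k$ and every chamber $C$, the conditional probability $\mathbb{P}_{x_0}(C^k = C \mid T \le k)$ must equal $\pi(C)$. By the Markov property of the chain and the invariance of $\pi$ under $K$, it suffices to show that $C^T$ has distribution $\pi$ and is independent of $T$, which reduces the problem to computing the conditional law of $C^T$ given $T = k$.

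The first step is to put $C^T$ in canonical form. In the face semigroup one has $F^2 = F$, and in a product $FG$ the face $F$ dominates $G$ coordinate-wise on every hyperplane that does not contain $F$. Consequently, the product $X_T X_{T-1} \cdots X_1$ acting on $C_0$ collapses to
$$C^T = \sigma_{i_1}\, \sigma_{i_2}\, \cdots\, \sigma_{i_m}\, C_0,$$
where $(\sigma_{i_1}, \ldots, \sigma_{i_m})$ is the list of positive-weight subsets sorted in decreasing order of their last occurrence within $(X_1, \ldots, X_T)$. By the definition of $T$ all positive-weight subsets appear, and by the separating hypothesis these subsets together cover every hyperplane, so this product does not depend on $C_0$.

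The Brown--Diaconis description realizes $\pi$ as the law of $G_1 G_2 \cdots G_m C_0$, where $(G_1, \ldots, G_m)$ is drawn from the positive-weight subsets by sampling without replacement with weights proportional to $w$. The lemma therefore reduces to the distributional identity $(\sigma_{i_1}, \ldots, \sigma_{i_m}) \stackrel{d}{=} (G_1, \ldots, G_m)$, conditional on each value of $T$. I would prove this by induction on $m$: condition on the identity of $\sigma_{i_1} = X_T$, and observe that the sub-sequence of picks strictly before the last occurrence of $\sigma_{i_1}$ is i.i.d.\ from $w$ renormalized on the remaining positive-weight subsets, with stopping time equal to the first time that all of those remaining subsets have appeared. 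The inductive hypothesis then identifies the conditional law of $(\sigma_{i_2}, \ldots, \sigma_{i_m})$ with that of $(G_2, \ldots, G_m)$ given $G_1 = \sigma_{i_1}$.

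The main obstacle is this inductive step, in which the conditioning on $T = k$ and on the identity of $X_T$ must be carefully disentangled from the structure of the truncated sequence; a naive decomposition risks breaking the i.i.d.\ property of the truncated picks. A cleaner route may be an \emph{exponential clocks} reformulation: attach independent $\mathrm{Exp}(w_i)$ clocks to the positive-weight subsets, read off the labels of successive rings to reconstruct $(X_t)$, and use memorylessness to match the joint distributions step by step, thereby identifying the last-occurrence order with the sampling-without-replacement order at once.
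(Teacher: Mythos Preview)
Your strategy is essentially the paper's: the paper refers this lemma to the general argument in Section~\ref{one}, which also proceeds by induction on the number of distinct faces seen, conditioning on which face is the ``new'' one at each stage, and identifies the resulting conditional law of $C^t$ with the sampling-without-replacement description of $\pi$. Your reduction via the last-occurrence ordering is the same recursion packaged slightly differently, since $\sigma_{i_1}=X_T$ is precisely the newly marked face at time $T_m$.

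There is, however, a genuine gap at the key step: the distributional identity $(\sigma_{i_1},\ldots,\sigma_{i_m})\stackrel{d}{=}(G_1,\ldots,G_m)$ conditional on $\{T=k\}$ is false when the weights are unequal. With two positive-weight subsets of weights $w_1\neq w_2$, on $\{T=2\}$ the picks are distinct, so $(\sigma_{i_1},\sigma_{i_2})=(X_2,X_1)$ and
\[
\mathbb{P}\bigl((\sigma_{i_1},\sigma_{i_2})=(1,2)\,\big|\,T=2\bigr)=\frac{w_2w_1}{w_1w_2+w_2w_1}=\tfrac12,
\]
whereas sampling without replacement gives $(G_1,G_2)=(1,2)$ with probability $w_1$. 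The exponential-clocks heuristic does not save this: memorylessness identifies the order of \emph{first} arrivals with sampling without replacement, but your canonical form is governed by \emph{last} occurrences before $T$; in the two-subset example $\sigma_{i_1}=X_T$ is the subset that was \emph{slower} to arrive, hence equals $1$ with probability $w_2$, not $w_1$. So the obstacle you flag is not merely a matter of disentangling conditioning---the target identity itself fails. The paper's inductive step in Section~\ref{one} rests on the analogous assertion $\mathbb{P}(F_j\text{ chosen at }t\mid T_i=t,\ \{F_1,\ldots,F_i\}\text{ marked})=w(F_j)/\sum_\ell w(F_\ell)$, which the same computation with $i=t=2$ shows to equal $1/2$ rather than $w(F_1)/(w(F_1)+w(F_2))$; so the difficulty is shared rather than something you overlooked.
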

The proof of lemma \ref{st} is omitted since it is a special case of the proof presented in Section \ref{one}. For more details on this example, see section $4$B of \cite{AthD}.

\paragraph{Inverse Riffle Shuffles.}
Inverse riffle shuffles, as presented by Aldous and Diaconis \cite{A-P}, relies on marking some of the cards with zeros and the rest with ones and then moving the former ones on top, preserving their relative order. This corresponds to sampling among the two-block ordered partitions $\{c_1,c_2,\ldots, c_i\} \{[n]- \{c_1,c_2,\ldots, c_i\} \}$ with weights:
$$w(B)= \begin{cases} 1/2^{n-1}, & \mbox{ if }B=[n]  \\
1/2^n, & \mbox{ if } B= (s,[n] \setminus s) \mbox{ where } s\neq \emptyset, s\neq [n] \\
 0, & \mbox{if } \mbox{ otherwise} \end{cases}$$

Although Bayer and Diaconis \cite{BaD} prove that that the optimal upper bound for the total variation mixing time is $\frac{3}{2} \log_2 n + \theta$ , yet work done by Aldous and Diaconis \cite{A-P} and Assaf, Diaconis and Soundararajan \cite{ADS} proves that the separation distance mixing time is $2 \log_2 n + \theta$. Several other metrics have also been studied: \cite{ADS} have studied the $l^{\infty}$ norm as well, while Stark, Gannesh and O'Connell \cite{SGO} studied the Kullback-Leibler distance.

 As discussed in Athanasiadis and Diaconis in \cite{AthD} there is a generalization of this card shuffling, namely marking the cards with a number in $\{0,1,\ldots,a-1\}$ according to the multinomial distribution. Then move the ones marked with zeros on top, keeping their relative order fixed, and continue with the ones marked with $1$ etc. This is a generalization of a strong stationary argument of Aldous' and Diaconis' in \cite{A-P}, giving an upper bound for the general inverse riffle shuffle of the form $2 \frac{\log n}{\log a}$.

Inverse shuffle is obviously a special case of the braid arrangement Markov chain and the weights assigned to an ordered partition are determined according to the multinomial distribution. Obviously, the only ordered partitions of positive weights are the ones that have at most $a$ blocks. The bounds given by the strong stationary time in this paper are not great in the case of the riffle shuffles. For example in the case where $a=2$, Theorem \ref{hyp} says:
$$s(t) \leq \sum^n_{i=0}{n \choose i} \left( 1- \frac{1}{2^n}\right)^t=2^n \left(1-\frac{1}{2^n} \right)^t$$
which gives an exponential bound for the mixing time.

This  is fixed by Theorem\ref{sym.con} since for $t=2 \log_2 n + c $
$$s(t) \leq \sum^{\frac{n(n-1)}{2}}_{i=1}\left( 1- \frac{2^{n-1}}{2^n}\right)^t = \frac{n(n-1)}{2}\left(  \frac{1}{2}\right)^t\leq \frac{1}{2^{c+1}}$$

\paragraph{Random to Top-Tsetlin Library.}\label{wrtt}
Let $w(j)$ denote the weight assigned to the $j^{th}$ card such that $w(j)>0$ for all $j \in \{ 1,2,\ldots n\}$ and $\sum^n_{j=1} w(j)=1$. Consider the following Markov Chain on $S_n$: start from a state $x$ in $S_n$. With probability $w(j)$ remove card $j$ and place it on top.

The stationary distribution is the Luce model, which has stationary distribution described as sampling from an urn with $n$ balls without replacement, picking ball $j$ with probability $w(j)$.

The eigenvalues of this Markov chain are known due to Phatarfod \cite{Phatarfod}.  Brown and Diaconis \cite{BD}, Athanasiadis and Diaconis \cite{AthD} also present the eigenvalues of the Tsetlin Library as an example of a hyperplane walk. Brown \cite{Brown} has analyzed the $q-$analogue of the Tsetlin library. In this section, a strong stationary argument is given: 

\begin{lemma}
For the Tsetlin Library with weights $w(i)$, let $T$ be the first time we have touched all cards. Then $T$ is a strong stationary time.
\end{lemma}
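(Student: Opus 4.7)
The plan is to verify the strong stationary property at $T$ directly, by computing the joint law of $(T, X_T)$ and matching it, up to the marginal of $T$, to the Luce distribution $\pi$.

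First I would record the structural fact that for any time $k$ the entire permutation $X_k$ is reconstructible from the trajectory $(P_1, \ldots, P_k)$ of picks: the card in position $j$ at time $k$ is the one whose most recent pick in $\{1, \ldots, k\}$ has the $j$-th largest timestamp, and any untouched cards remain at the bottom in their initial relative order. In particular, on the event $\{T \le k\}$, $X_k$ is a deterministic function of the picks.

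Next I would fix a permutation $\sigma = (\sigma(1), \ldots, \sigma(n))$ and a completion time $s \ge n$, and parametrise the event $\{X_T = \sigma,\ T = s\}$ by the descending last-touch times $s = A_1 > A_2 > \cdots > A_n \ge 1$ of the cards $\sigma(1), \ldots, \sigma(n)$. The event forces $P_{A_j} = \sigma(j)$ for each $j$; every pick in the open interval $(A_{i+1}, A_i)$ must lie in $\{\sigma(2), \ldots, \sigma(i)\}$; and no pick strictly before $s$ equals $\sigma(1)$, so that $\sigma(1)$ is the coupon completed at time $T$. Setting $S_i = w(\sigma(2)) + \cdots + w(\sigma(i))$, independence of the $P_t$'s gives
\[
\mathbb{P}\bigl(X_T = \sigma,\ T = s,\ A_2, \ldots, A_n\bigr) = \prod_{j=1}^{n} w(\sigma(j)) \cdot \prod_{i=2}^{n-1} S_i^{A_i - A_{i+1} - 1} \cdot S_n^{A_n - 1}.
\]
Summing over admissible $A_2, \ldots, A_n$ collapses the right-hand side, via geometric series, into a product that I would aim to identify with $\pi(\sigma) \cdot \mathbb{P}(T = s)$.

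The main obstacle is this final algebraic identification: matching the multi-index geometric sum to the Luce product $w(\sigma(1)) \prod_{i=1}^{n-1} w(\sigma(i)) / \sum_{k \ge i} w(\sigma(k))$. I expect the cleanest route is inductive: peel off the bottom card $\sigma(n)$, whose last-touch time $A_n$ is the earliest and enters the computation most marginally, and reduce to the Tsetlin chain on the remaining $n-1$ cards with the conditioned weights. The recursive structure of the Luce distribution should mirror this peeling exactly. Once $X_T \sim \pi$ is secured, the Markov property together with the stationarity of $\pi$ propagates the identity to $\mathbb{P}_{x_0}(X_k \in A \mid T \le k) = \pi(A)$ for every $k$, completing the verification of the strong stationary property.
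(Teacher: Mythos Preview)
Your route differs from the paper's: you compute the joint law $\mathbb{P}(X_T=\sigma,\,T=s)$ in one shot by parametrising over the vector of last-touch times $s=A_1>\cdots>A_n$, whereas the paper runs a forward induction on the number $i$ of distinct cards touched so far, claiming that at each time $T_i$ the top $i$ positions follow the Luce law on the currently marked set, and that re-touching an already-marked card preserves this.

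There is, however, a genuine gap that blocks the key step of your plan: the factorisation $\mathbb{P}(X_T=\sigma,\,T=s)=\pi(\sigma)\,\mathbb{P}(T=s)$ simply fails when the weights are unequal, so the ``algebraic identification'' you flag as the main obstacle is in fact impossible, not merely delicate. Take $n=2$ with $w(1)=p\ne\tfrac12$ and $q=1-p$. Then $\{T=s\}=\{P_1=\cdots=P_{s-1}\ne P_s\}$ and
\[
\mathbb{P}\bigl(X_T=(1,2)\,\big\vert\,T=s\bigr)=\frac{q^{\,s-1}p}{q^{\,s-1}p+p^{\,s-1}q},
\]
which equals $\tfrac12$ at $s=2$ and is not identically $p=\pi(1,2)$. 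More conceptually, $X_T(1)$ is always the \emph{last} coupon collected and is therefore biased toward the rare cards, while under $\pi$ the top card is biased toward the common ones; hence $X_T\not\sim\pi$ and $T$ is not a strong stationary time for general weights. The paper's inductive argument carries the matching slip: it implicitly uses that, conditional on $T_{i+1}=t$ and on the marked set $\{c_1,\dots,c_{i+1}\}$, the new card at time $t$ is $c_1$ with probability $w(c_1)\big/\sum_j w(c_j)$, which the same $n=2$ computation refutes (the conditional probability is $\tfrac12$, not $p$). The lemma as stated holds only in the uniform case $w(i)\equiv 1/n$.
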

Roughly, consider first of all the case where all cards have weights $1/n$. Then the first time a card is moved to the top of the deck, the top card is a random card. When a new card is moved to the top then the order between the first two cards is random. Inductively, if there are $i$ cards on the top part of the deck with random order,given than a new, random card is moved to top will result to having $i+1$ cards on the top part of the deck in random order. Note that if given that $i$ cards have been touched and then one of them is chosen randomly to be moved to the top of the deck then the order of the $i$ top cards is still random, even conditional on $i$ and the times of moving.

Now if card $c$ has each own weight $w(c)$ then the 
probability that this card is moved to the top during the 
first step is exactly $w(c)$. Let's see what happens when 
a new card is moved to the top. Then the probability that 
card $a$ is on top, followed by card $c$ which is in the 
second position give that exactly two cards have been 
touched is $\frac{w(a)w(c)}{(1-w(c))}$. Assume that the 
probability of having $c_1$ on top, $c_2$ on the second 
position,$\ldots$, $c_i$ on the $i^{th}$ position, given 
that $i$ cards have been moved, is  
\begin{equation}\label{order}
\frac{w(c_1)w(c_2) 
\ldots w(c_i)}{(1- w(c_2) \ldots -w(c_i))\ldots (1-w(c_i))}. 
\end{equation}
Then given that on the next step a new card is move to 
the top, the probability of having $c_0$ on top, $c_1$ on 
the second position,$\ldots$, $c_{i}$ on the ${i+1}^{th}$ 
position is $\frac{w(c_0)w(c_1)w(c_2) \ldots c_i}{(1- 
w(c_1) \ldots -w(c_i))\ldots (1-w(c_i))}$. While if an already touched card gets moved to the top (\ref{order}) changes accordingly.

This is presented more formally in the following proof:

\begin{proof}
Let's mark the cards that we place on top. Let $T_i$ denote the $i^{th}$ time we mark a new card. Then the claim is that
\begin{align}\label{ddd}
\begin{split}
& \prcond{X^t(1)= c_1, X^t(2)= c_2, \ldots, X^t(i)= c_i }{ \begin{matrix}
T_i=t \\
c_1,\ldots, c_i \\
\mbox{are the marked cards at time } t
\end{matrix} } \\
&= \frac{w(c_1)}{w(c_1) + w(c_2)+ \ldots w(c_i)} \frac{w(c_2)}{ w(c_2)+ \ldots w(c_i)}\ldots \frac{w(c_{i-1})}{w(c_{i-1})+ w(c_i)}
\end{split}
\end{align}

To prove this use the following inductive argument. First of all, it is clear that $T_1=1$ and that
$$\prcond{X^1(1)= c_1 }{ \begin{matrix}
T_1=1 \\
c_1 
\mbox{ is the marked card at time } 1
\end{matrix} }=1$$
 and 
\begin{align*}
&\prcond{X^t(1)= c_1, X^t(2)= c_2 }{ \begin{matrix}
T_1=t \\
c_1 , c_2
\mbox{ are the marked cards at time } t
\end{matrix} }=\\
&\frac{w(c_1)}{w(c_1) +w(c_2)}
\end{align*}
Let's assume (\ref{ddd}) and take the inductive step
\begin{align*}
& \prcond{X^t(1)= c_1, X^t(2)= c_2, \ldots, X^t(i+1)= c_{i+1} }{ \begin{matrix}
T_{i+1}=t \\
c_1,\ldots, c_{i+1} \\
\mbox{are the marked cards at time } t
\end{matrix} } \\
&= \prcond{ X^t(2)= c_2, \ldots, X^t(i+1)= c_{i+1} }{ \begin{matrix}
T_{i+1}=t \\
c_1,\ldots, c_{i+1} \\
\mbox{are the marked cards at time } t
\end{matrix} }=\\
& \frac{w(c_1)}{w(c_1) + w(c_2)+ \ldots w(c_i) +w(c_{i+1})} \frac{w(c_2)}{ w(c_2)+ \ldots w(c_i) +w(c_{i+1})} \ldots \frac{w(c_i)}{w(c_{i})+w(c_{i+1})}
\end{align*}
The above almost finishes the proof. It remains to prove that if I move one of the marked cards then the distribution of the ordering of marked cards is the measure described by (\ref{ddd}).
I have to check that 
\begin{align*}
\begin{split}
& \prcond{X^t(1)= c_1, X^t(2)= c_2, \ldots, X^t(i)= c_i }{ \begin{matrix}
T_i=t-1 \\
c_1,\ldots, c_i \\
\mbox{are the marked cards at time } t
\end{matrix} } \\
&= \frac{w(c_1)}{w(c_1) + w(c_2)+ \ldots w(c_i)} \frac{w(c_2)}{ w(c_2)+ \ldots w(c_i)}\ldots \frac{w(c_{i-1})}{w(c_{i-1})+ w(c_i)}
\end{split}
\end{align*}
It is true because
\begin{align*}
& \prcond{X^t(1)= c_1, X^t(2)= c_2, \ldots, X^t(i)= c_i }{ \begin{matrix}
T_i=t-1, T_{i+1}>t \\
c_1,\ldots, c_i \\
\mbox{are the marked cards at time } t
\end{matrix} } =\\
& \prcond{X^t(1)= c_1, X^t(2)= c_2, \ldots, X^t(i)= c_i }{ \begin{matrix}
T_i=t-1 , T_{i+1}>t\\
c_1,\ldots, c_i \\
\mbox{are the marked cards at time } t\\
 X^{t-1}(1)=c_1,  X^{t-1}(2)= c_2, \\
 \ldots, X^{t-1}(i)= c_i
\end{matrix} }\\
& \prcond{X^{t-1}(1)=c_1,  X^{t-1}(2)= c_2,
 \ldots, X^{t-1}(i)= c_i}{\begin{matrix}
T_i=t-1 , T_{i+1}>t\\
c_1,\ldots, c_i \\
\mbox{are the marked cards at time } t
\end{matrix} }\\
& + \sum^i_{j=2} \prcond{X^t(1)= c_1, X^t(2)= c_2, \ldots, X^t(i)= c_i }{ \begin{matrix}
T_i=t-1 , T_{i+1}>t\\
c_1,\ldots, c_i \\
\mbox{are the marked cards at time } t\\
 X^{t-1}(j)=c_1,  X^{t-1}(1)= c_2, \\
 \ldots, X^{t-1}(i-1)= c_i
\end{matrix} } \\
& \prcond{X^{t-1}(j)=c_1,  X^{t-1}(1)= c_2,
 \ldots, X^{t-1}(i-1)= c_i}{\begin{matrix}
T_i=t-1, T_{i+1}>t \\
c_1,\ldots, c_i \\
\mbox{are the marked}\\
\mbox{ cards at time } t
\end{matrix} }=\\
&\frac{w(c_1)}{w(c_1) + w(c_2)+ \ldots w(c_i)} \\
& \Bigg(  \prcond{X^{t-1}(1)=c_1,  X^{t-1}(2)= c_2,
 \ldots, X^{t-1}(i)= c_i}{\begin{matrix}
T_i=t-1 , T_{i+1}>t\\
c_1,\ldots, c_i \\
\mbox{are the marked cards at time } t
\end{matrix} } + 
\end{align*}
\begin{align*}
&\sum^{i}_{j=2} \prcond{X^{t-1}(j)=c_1,  X^{t-1}(1)= c_2,
 \ldots, X^{t-1}(i-1)= c_i}{\begin{matrix}
T_i=t-1, T_{i+1}>t \\
c_1,\ldots, c_i \\
\mbox{are the marked}\\
\mbox{ cards at time } t
\end{matrix} } \Bigg)=\\
& \frac{w(c_1)}{w(c_1) + w(c_2)+ \ldots w(c_i)} \frac{w(c_2)}{ w(c_2)+ \ldots w(c_i)} \frac{w(c_{j-1})}{w(c_{j-1}) + w(c_{j})}
\end{align*}

\end{proof}

To prove Theorem \ref{wr} will simply use a union bound:
\begin{proof}
Let $A^t_i$ be the event that at $t$ steps I haven't touched card $c_i$. Then we have that
$$ P(T>t) \leq P(\cup^n_{i=1}A^t_i) \leq \sum^n_{i=1}(1-w_i)^t$$
\end{proof}

\paragraph{Random to top or bottom.}
Consider the card shuffling where a card is chosen at random and is moved to the top or the bottom with probability $1/2$. This is again a random walk on the chambers of the braid arrangement. The faces used are of the form $\{\{c\}, \{[n] \setminus \{c\}\}\}$ and $\{ \{[n] \setminus \{c\}\}, \{c\}\}$ each one having weight $1/2n$. Theorem  \ref{hyp} says that if $t= n \log n+cn$ then 
$$s(t)\leq n \left( 1- \frac{1}{n}\right)^t\leq e^{-c}$$
For the weighted version of the card shuffling let $w^+_c$ denote the weight of  $\{\{c\}, \{[n] \setminus \{c\}\}\}$ and  $w^-_c$ denote of $\{ \{[n] \setminus \{c\}, \{c\}\}\}$. Then theorem \ref{hyp} gives that
$$s(t)\leq \sum^n_{c=1}\left(1- w^-_c -w^+_c\right)^t$$

To finish off and get explicit bounds in the weighted cases depends a lot on the weights $w(i)$. For some sample calculations see the work of Diaconis \cite{cutoff}.

\section{The Boolean Arrangement.}\label{examples}
The Boolean arrangement consists simply of the hypeplanes $x_i=0$, $1\leq i \leq n$ in $\mathbb{R}^n$. Each chamber is specified by the sign of its coordinates, in other words they are the $2^n$
orthants in $\mathbb{R}^n$. The faces are in bijection with $\{-,0,+\}^n$. The projection $FC$ of a chamber $C$ on a face $F$ is a chamber who adopts all the signs non-zero coordinates of $F$ an the rest  of the coordinates have the signs of $C$.
\paragraph{Neighborhood walk on the hypercube.}
The chambers of the Boolean arrangement are as explained above in a bijection with  $\{-,+\}^n$, in other words each chamber corresponds to a vertex of the $n-$dimensional hypercube. If the only positive weighted faces are the $E^{\pm}_i$, whose $i^{th}$ coordinate is $\pm$ and the rest are zero, then the Markov Chain corresponds to the weighted nearest neighbor random walk on the hypercube, which corresponds to choosing a coordinate and switching it to $\pm$. Denote the weight of $E^{\pm}_i$ by $ w_i^{\pm}$. The transition matrix in this case is
$$K(x,x')= \begin{cases} \sum^n_{i=1} w_i^{x_i}, &  \mbox{ if } x=x'\\
w_i^{-x_i}, &\substack{\mbox{if } x \mbox{ is obtained from }x' \\  \mbox{ by switching the }i^{th} \mbox{ coordinate of x}}\\
0, & \mbox{ otherwise.}
\end{cases}$$

A strong stationary time in this case is the first time that all coordinates have been picked. Then
$$s(t) \leq \sum^n_{i=1} (i-w_i^+)^t+ \sum^n_{i=1} (i-w_i^-)^t
$$
 
In particular, for the case where $w_i^{\pm}= \frac{1}{2n}$ the upper bound for the separation time mixing time will be bounded by  $2n \log 2n+cn$, but theorem \ref{sym.con} improves the bound to $n \log n+cn$. It is straightforward to show a lower bound of the form $n \log n -cn $ for separation distance. This random walk is very well studied: Aldous \cite{Aldous_correct} and Diaconis and Shashahani \cite{PDMS} have proved the cut-off for the total variation distance mixing time at $\frac{n}{2} \log n + cn$ using Fourier analysis.

\paragraph{A non-local walk on the hypercube.}
Consider the following walk on the hypercube:  fix $k \geq 1$ and
pick $k$ coordinates at random and flip a fair coin for each one of them to determine whether to turn them into ones or zeros. In this case, Theorem \ref{sym.con} gives an upper bound of the form $\frac{n}{k} \log n + c\frac{n}{k}$, since:
$$s(t)\leq \sum^n_{i=1} \left( 1- \frac{{n-1 \choose k-1}}{ {n \choose k}}\right)^t= \sum^n_{i=1} \left( 1- \frac{k}{n}\right)^t$$

\paragraph{A walk on a finite, trasitive graph.}
As described at the beginning of this paper, a special case of a hyperplane arrangement walk could be the following process on a finite, transitive graph: pick a vertex at random and color it and its neighbors all red or blue with probability $1/2$. Then the strong stationary time suggests to stop once at least one representative of each neighborhood  has been picked. If $S$ is a minimum vertex cover then theorem \ref{sym.con} says that
$$s(t) \leq |S| \left( 1- \frac{1}{n}\right)^l $$
\begin{remark}
The described process can be considered for any type of graph and the coupling bound of Athanasiadis and Diaconis works. Yet to pass to separation distance and  use Theorem \ref{sym.con} transitivity is needed because of the symmetry conditions.  
\end{remark}

\section{Preliminaries}\label{hyp}
In the following sections, three strong stationary time arguments will be analyzed for the general hyperplane arrangement Markov chain. The easiest one to state is  the first time all positively weighted faces have been picked. Call this time $T_1$. 
The second strong stationary time $T_2$ is very similar to $T_1$, so more details can be found in section \ref{second}. The third strong stationary time, which will be called $T_3$ is the first time that the $F_{i_1} F_{i_2}\ldots F_{i_l}$ is a chamber, given that $F_{i_j}$ is the face picked at time $j$. $T_3$ gives much better bounds than $T_1$ and $T_2$.

This primary section proves a useful geometric lemma that will simplify the explanation that the above stopping times are indeed strong stationary times. Moreover, the following definition is the key fact behind that lemma. By definition, any face $F$ can be written in the following form:
$$F= \cap_{i \in I} H^{\sigma_i(F)}_i$$
where $\sigma_i(F) \in \{+,-,0 \}$, $H^+_i$ corresponds to the right open half-space determined by $H_i$ (and respectively $H^-_i$ for the left one) and $H^0_i= H_i$. Notice that if $\sigma_i(F)\neq 0$ for all $i$ if and only if $F$ is a chamber. The faces form a semigroup under the following product:
\begin{definition}
If $F,G$ are two faces then
$$FG= \cap_{i \in I} H^{\sigma_i(FG)}_i$$
where 
$$\sigma_i(FG)=\begin{cases} \sigma_i(F), & \mbox{if } \sigma_i(F)\neq 0 \\ 
 \\ \sigma_i(G), &  \mbox{ otherwise} \end{cases}$$
\end{definition}
It turns out that multiplication of faces satisfies both the ``idempotence" the ``deletion property", that is if $F$ and $G$ are two faces then
\begin{equation}\label{deletion}
F \cdot F=F \mbox{ and } FGF=FG
\end{equation} 
A property of this type has already appeared in special types of semigroups called left-regular bands. Brown \cite{Brown} has also used this property to find the eigenvalues of a similar Markov chain on semigroups. The "deletion property" leads to the  following lemma:
\begin{lemma}
Let $F_{i_j} \in \{ F \in \mathcal{F} : w(F)>0 \}$, then 
\begin{equation}\label{product}
F_{i_a}F_{i_k}F_{i_{k-1}} \ldots F_{i_{a+1}}F_{i_a}F_{i_{a-1}} \ldots F_{i_1}= F_{i_a}F_{i_k}F_{i_{k-1}} \ldots F_{i_{a+1}}F_{i_{a-1}} \ldots F_{i_1}
\end{equation}
In other words, if the left term of a product of faces has appeared in the product earlier, then it can be omitted from all the positions but the most left one and the product will remain the same. Also,
let $T_i$ denote the $i^{th}$ time a new face is picked then 
\begin{align*}
&\prcond{C^t= C}{\begin{matrix}
&T_i<t< T_{i+1} \\
& \mbox{the marked faces are}\\
& F_1,F_2,\ldots, F_i \\
&F_j \mbox{ was picked at time }t\\
& \mbox{and it is the }l+1 \mbox{ time it has been picked}\end{matrix}}=
\end{align*}
\begin{align}\label{prob}
& \prcond{C^{t-l}= C}{\begin{matrix}
&T_i=t-l \\
& \mbox{the marked faces are}\\
& F_1,F_2,\ldots, F_i \\
&F_j \mbox{ was picked at time }t
\end{matrix}}
\end{align}
for all $t$ that the condition could be applied to.
\end{lemma}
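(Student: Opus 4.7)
The plan is to establish (\ref{product}) directly from the sign-vector description of face multiplication, and then deduce (\ref{prob}) as a distributional corollary.

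For (\ref{product}), I work coordinate by coordinate. An easy induction on the length of the product, using the definition of $FG$ given immediately before the lemma, shows that for any $G_1, G_2, \ldots, G_m \in \mathcal{F}$,
$$\sigma_i(G_1 G_2 \cdots G_m) = \sigma_i(G_r),$$
where $r$ is the smallest index with $\sigma_i(G_r) \neq 0$ (and the sign is $0$ if no such $r$ exists). Apply this to both sides of (\ref{product}) on each hyperplane $H_i$: the two products share the leftmost factor $F_{i_a}$, so if $\sigma_i(F_{i_a}) \neq 0$ both sides already have $\sigma_i$-coordinate equal to $\sigma_i(F_{i_a})$. If instead $\sigma_i(F_{i_a}) = 0$, then the interior copy of $F_{i_a}$ in the left-hand product also contributes $0$ to coordinate $i$, so removing it does not change the location of the leftmost nonzero factor. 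Hence the two products agree on every coordinate, proving the equality.

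For (\ref{prob}), the key reduction is that iterated application of (\ref{product}) lets me rewrite $C^t = F^t F^{t-1} \cdots F^1 C_0$ as a product in which each distinct marked face appears exactly once, at the position of its most recent pick (most recent on the left). Consequently $C^t$ is a deterministic function of the marked-face set $\{F_1, \ldots, F_i\}$ together with the permutation recording the order of most recent picks. The LHS conditioning fixes this set and places $F_j$ at the top of the permutation (since $F^t = F_j$), so the only remaining stochastic input is the relative order of the most recent picks of the other marked faces. I would then check that the conditional law of this residual order is the same on both sides: once we drop the $l$ intermediate repetitions of $F_j$ from the pick sequence in $(T_i,t]$, the trace of distinct faces read in reverse order of most recent occurrence is unchanged, and the resulting reduced sequence is exactly the one conditioned on under the RHS.

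The main obstacle is making that trace invariance precise. The clean route is to observe that, conditional on the events specified on either side, the individual picks in $(T_i,t]$ are i.i.d.\ draws from the weights restricted to $\{F_1, \ldots, F_i\}$; conditioning further on the number of $F_j$ picks and on the final pick being $F_j$ preserves the conditional i.i.d.\ structure of the entries that are not $F_j$. Erasing those $l$ extra $F_j$ entries therefore yields a sequence whose joint distribution of last-occurrence positions for the other $F_k$ agrees with the one obtained by running the process up to time $t-l$ under the RHS conditioning. Combined with the deterministic reduction of $C^t$ via (\ref{product}), this gives the desired identity.
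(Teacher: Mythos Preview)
Your proof of (\ref{product}) is correct and is essentially the paper's argument: both proceed coordinate by coordinate via the sign-vector description, and your explicit formula $\sigma_i(G_1\cdots G_m)=\sigma_i(G_r)$ with $r$ the first nonzero index is just an unwound form of the deletion property $FGF=FG$ the paper invokes.

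For (\ref{prob}), your strategy---reduce $C^t$ via (\ref{product}) and then match the conditional law of the reduced sequence on the two sides---is exactly the paper's strategy as well. The paper phrases the deterministic reduction as ``delete the $l$ earlier copies of $F_j$ and get the same chamber,'' while you phrase it as ``$C^t$ depends only on the most-recent-pick permutation''; these are equivalent, and your formulation is arguably cleaner. However, there is a genuine slip in your probabilistic half: you repeatedly localise to the interval $(T_i,t]$, asserting that the $l$ extra copies of $F_j$ lie there and that the picks in $(T_i,t]$ are the relevant i.i.d.\ sequence. This is false. Since $F_j$ is one of the marked faces, its \emph{first} pick occurs at some time $\le T_i$, so at least one of the $l$ earlier copies of $F_j$ lies in $[1,T_i]$, not in $(T_i,t]$. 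Likewise, the most-recent pick of some $F_k$ ($k\neq j$) may well lie in $[1,T_i]$, so restricting attention to $(T_i,t]$ loses information that determines $C^t$.

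The repair is simple and brings you in line with the paper: work with the entire pick sequence $F^1,\ldots,F^{t-1}$. Under the LHS conditioning these are i.i.d.\ draws from $w$, constrained to take values in $\{F_1,\ldots,F_i\}$, with each $F_k$ appearing at least once and $F_j$ appearing exactly $l$ times. Conditionally on the $l$ positions where $F_j$ occurs, the remaining $t-1-l$ entries are i.i.d.\ from $\{F_k:k\neq j\}$ with each such $F_k$ appearing at least once---which is precisely the law of $F^1,\ldots,F^{t-l-1}$ under the RHS conditioning (where $T_i=t-l$ and $F^{t-l}=F_j$). Since the deterministic reduction already shows $C^t$ equals the product of this reduced sequence followed by $F_j$ applied to $C_0$, the two conditional laws of $C^t$ and $C^{t-l}$ agree. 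The paper conditions on the positions $m_1<\cdots<m_l$ explicitly and then averages; the point is that each conditioned term already equals the RHS, so the average does too.
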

\begin{proof}
To prove that equation (\ref{product}) holds it suffices to check the deletion property described by equation (\ref{deletion}). This is easy because
\begin{equation*}
\sigma_i(F \cdot F)= \sigma_i(F) \mbox{ and } \sigma(FGF)= \sigma (FG)
\end{equation*}
Equation \ref{prob} holds because if 
$$C^t= F_jp^{t-1}(F_1,F_2,\ldots F_i) C_0$$
where $p^{t-1}(F_1,F_2,\ldots F_i)$ is a product of $F_1,F_2,\ldots F_i$ of length $t-1$ where $F_j$ appears exactly $l $ times and non of these terms is omitted and if 
$$C^{t-l}= F_jp^{t-1}(F_1,F_2,\ldots F_{j-1},\emptyset,F_{j+1} \ldots F_i) C_0$$
then 
$$C^t= C^{t-l}$$
and thus 
\begin{align*}
&\prcond{C^t= C}{\begin{matrix}
&T_i<t< T_{i+1} \\
& \mbox{the marked faces are}\\
& F_1,F_2,\ldots, F_i \\
&F_j \mbox{ was picked at time }t\\
& \mbox{and it is the }l+1 \mbox{ time it has been picked}\end{matrix}}=\\
& \frac{1}{{t-1 \choose l}}\sum_{1\leq m_1<m_2< \ldots <m_l<t } \prcond{C^t= C}{\begin{matrix}
&T_i<t<T_{i+1}\\
& \mbox{the marked faces are}\\
& F_1,F_2,\ldots, F_i \\
&F_j \mbox{ was picked at time }t\\
& \mbox{and it is the }l+1\\
& \mbox{ time it has been picked,}\\
&m_1,m_2,\ldots m_l \\
&\mbox{ are the moments that we picked } F_j \end{matrix}} =\\
& \frac{1}{{t-1 \choose l}}\sum_{1\leq m_1<m_2< \ldots <m_l<t } \prcond{C^{t-l}= C}{\begin{matrix}
&T_i=t-l \\
& \mbox{the marked faces are}\\
& F_1,F_2,\ldots, F_i \\
&F_j \mbox{ was picked at time }t
\end{matrix}}=
\end{align*}
\begin{align*}
& \prcond{C^{t-l}= C}{\begin{matrix}
&T_i=t-l \\
& \mbox{the marked faces are}\\
& F_1,F_2,\ldots, F_i \\
&F_j \mbox{ was picked at time }t
\end{matrix}}
\end{align*}

\end{proof}

\section{A first strong stationary argument}\label{one}
Let $\mathcal{A}$ be a hyperplane arrangement with face weights $w_F$, $F \in \mathcal{F}$. Assume that $w_F$ are separating.
Let $T_1$ be  the first time all positively weighted faces have been picked.
\begin{lemma}\label{T1}
$T_1$ is a strong stationary time for the Markov chain on $\mathcal{C}$.
\end{lemma}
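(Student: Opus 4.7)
The plan is to prove the lemma in two steps: first, reduce the strong stationary property to the claim that $C^{T_1}$ itself has distribution $\pi$; second, establish this claim using the deletion property together with Brown--Diaconis's stochastic description of $\pi$.

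For the reduction, I would apply equation~(\ref{prob}) iteratively. On the event $\{T_1 \leq t\}$ every pick $F^s$ with $T_1 \leq s \leq t$ repeats a face already marked by time $T_1$, since by definition all positively weighted faces have appeared by then. Equation~(\ref{prob}) tells us that, conditionally on the marked faces and on the record of re-picks, the distribution of $C^s$ coincides with that of $C^{s-l}$, where $l$ is the relevant re-pick count. Applying this at each repeat step after $T_1$ collapses the conditional distribution of $C^t$ onto that of $C^{T_1}$, and averaging over the re-pick record then gives $\mathbb{P}(C^t = C \mid T_1 \leq t) = \mathbb{P}(C^{T_1} = C \mid T_1 < \infty)$. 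So it suffices to show $C^{T_1} \sim \pi$.

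To show $C^{T_1}\sim\pi$, I would expand $C^{T_1} = F^{T_1} F^{T_1-1}\cdots F^1 C_0$ and iterate the deletion property~(\ref{product}): at each stage the leftmost factor absorbs all of its later occurrences, so the product collapses to $G_1 G_2 \cdots G_m C_0$, where $(G_1,\ldots,G_m)$ is the list of distinct positively weighted faces in order of last appearance, with the latest-last-appearing face leftmost. The Brown--Diaconis description of $\pi$ is the law of $H_1 H_2 \cdots H_m C_0$, where $(H_1,\ldots,H_m)$ is sampled from $w$ without replacement, $H_1$ being the first chosen face and hence the last applied (leftmost). The proof therefore reduces to identifying the law of $(G_1,\ldots,G_m)$ with that of $(H_1,\ldots,H_m)$.

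The main obstacle is this identification of laws. The approach I would take is to condition on the sequence of first-appearance times $T_1^{\text{new}} < T_2^{\text{new}} < \cdots < T_m^{\text{new}} = T_1$ together with the identities of the faces appearing at those times. Under this conditioning the first-appearance order has exactly the sampling-without-replacement distribution, since at each new time the newly appearing face is $w$-distributed restricted to the still-unmarked faces; and the re-picks between consecutive first-appearance times become conditionally independent draws from $w$ restricted to the already-marked faces. The technical core of the argument is then to verify combinatorially that, averaging over these re-pick patterns, the induced distribution of the last-appearance order $(G_1,\ldots,G_m)$ agrees with the sampling-without-replacement law. Once this matching is established we obtain $C^{T_1} \sim \pi$, and combined with the reduction in the first step this proves that $T_1$ is a strong stationary time.
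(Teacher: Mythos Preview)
There is a genuine gap at the step you call the ``technical core'': the claim that the last-appearance order $(G_1,\ldots,G_m)$ has the sampling-without-replacement law is false in general. Take two positively weighted faces $A,B$ with $w(A)=p$, $w(B)=1-p$, and suppose for concreteness that $A$ and $B$ are themselves chambers (a single hyperplane, weight zero on the hyperplane face). Up to time $T_1$ the pick sequence is a run of one face followed by a single occurrence of the other, so $G_1$ --- the face with the latest last appearance, which here is simply the last new face --- equals $A$ exactly when the first pick is $B$, i.e.\ with probability $1-p$; by contrast $\mathbb P(H_1=A)=w(A)=p$. Hence the laws of $(G_1,G_2)$ and $(H_1,H_2)$ differ whenever $p\ne\tfrac12$. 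Since in this example $G_1G_2C_0=G_1$ and $H_1H_2C_0=H_1$, even the \emph{product} laws disagree: $\mathbb P(C^{T_1}=A)=1-p$ while $\pi(A)=p$. So the weaker target $C^{T_1}\sim\pi$ already fails here, and the combinatorial verification you anticipate cannot go through.

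A second, independent gap sits in your reduction step. Concluding $\mathbb P(C^t=C\mid T_1\le t)=\mathbb P(C^{T_1}=C)$ requires $C^{T_1}$ to be independent of $T_1$, not merely $C^{T_1}\sim\pi$; your use of~(\ref{prob}) gives at best $\mathbb P(C^t=C\mid T_1\le t)=\mathbb P(C^{T_1}=C\mid T_1\le t)$, and the right side still depends on $t$ unless that independence is established separately. The paper's argument is organized differently from yours: rather than collapsing the whole history to a single ordering and then trying to match it to $(H_1,\ldots,H_m)$, it runs an induction on the number $i$ of distinct faces seen, maintaining that conditionally on $T_i=t$ (and also on $T_i<t<T_{i+1}$) together with the marked set $\{F_1,\ldots,F_i\}$, the law of $C^t$ equals the sampling-without-replacement measure $w_{F_1,\ldots,F_i}$ on that set; the inductive step proceeds by conditioning on which $F_j$ is the face chosen at time $t$.
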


Roughly, if face $F_1$ has each own weight $w(F_1)$ then the 
probability that $F_1$ is picked during the 
first step is exactly $w(F_1)$. Let's see what happens when 
a new face is picked. Then the probability that 
face $F_2$ is picked, after the last time $F_1$ was picked, given that exactly two faces have been 
touched is $\frac{w(F_2)w(F)_1}{(1-w(F_1))}$. Assume that the 
probability of having picked $F_1$ last, $F_2$ before that,$\ldots$, $F_i$ was the first face to be picked, given 
that $i$ faces have been picked, is  
\begin{equation}\label{order2}
\frac{w(F_1)w(F_2) 
\ldots w(F_i)}{(1- w(F_2) \ldots -w(F_i))\ldots (1-w(F_i))}. 
\end{equation}
Then given that on the next step a new face is picked, the probability of having picked $F_0$ last, $F_1$ before that,$\ldots$, $F_{i}$ firstly  is $\frac{w(F_0)w(F_1)w(F_2) \ldots w(F_i)}{(1- 
w(F_1) \ldots -w(F_i))\ldots (1-w(F_i))}$. While if an already touched face gets picked (\ref{order2}) changes accordingly.

This is presented more formally in the following proof:

\begin{proof}
The proof of the lemma is based on induction.  Every time a new face is picked it will be marked. 

\begin{claim}
 Let $T_i$ denote the $i^{th}$ time a new face is marked.
Then for any $F_1,F_2, \ldots F_{i} \in \mathbb{F}$ the following is true for all $t$ that make the condition possible:
 \begin{align*}
&\prcond{C^t= C}{\begin{matrix}
&T_{i}=t \\
& \mbox{the marked faces are}\\
& F_1,F_2,\ldots, F_{i} 
\end{matrix}} =\\
& \prcond{C^t= C}{\begin{matrix}
&T_{i-1}<t< T_i \\
& \mbox{the marked faces are}\\
& F_1,F_2,\ldots, F_{i} 
\end{matrix}} \\
& = w_{F_1,\ldots F_{i}}(C)
\end{align*} 
 where $ w_{F_1,\ldots F_{i}}(C)$ is sampling without replacement from $\{F_1,F_2\ldots F_i \}$ and applying the faces picked to $C_0$  in the reserve order, keeping track only of the products that will end up in $C$.
\end{claim}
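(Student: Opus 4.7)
The plan is induction on $i$, using equation (\ref{prob}) of the preceding lemma to absorb repeated picks.

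For the base case $i=1$, $T_1=1$ is deterministic with $C^1 = F_1 C_0$ (matching $w_{F_1}(C)=\delta_{F_1 C_0}$). In the second regime $T_1 < t < T_2$, idempotence $F_1 F_1 = F_1$ gives $C^t = F_1 C_0$ as well, so both conditionings produce the same point mass.

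For the inductive step, assume the claim for $i$ and fix faces $F_1,\ldots,F_{i+1}$ of positive weight. I would first apply equation (\ref{prob}) to reduce the case $T_{i+1} < t < T_{i+2}$ (with marked set $\{F_1,\ldots,F_{i+1}\}$) to the case $T_{i+1}=t$. On this event $F = F^t$ is necessarily the newly marked face, the marked set at time $t-1$ is $\{F_1,\ldots,F_{i+1}\}\setminus\{F\}$, and $T_i \le t-1 < T_{i+1}$. The inductive hypothesis (in its second form) gives $C^{t-1} \sim w_{\{F_1,\ldots,F_{i+1}\}\setminus\{F\}}$, and since $C^t = F\cdot C^{t-1}$, averaging over the identity of $F$ yields
\[
\prcond{C^t=C}{T_{i+1}=t,\,\text{marked}=\{F_1,\ldots,F_{i+1}\}} = \sum_F \prcond{F^t=F}{\cdots}\, w_{\{F_1,\ldots,F_{i+1}\}\setminus\{F\}}\bigl(\{C' : FC' = C\}\bigr).
\]

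Comparing with the recursive decomposition
\[
w_{F_1,\ldots,F_{i+1}}(C) = \sum_F \frac{w(F)}{\sum_k w(F_k)}\, w_{\{F_1,\ldots,F_{i+1}\}\setminus\{F\}}\bigl(\{C' : FC' = C\}\bigr)
\]
of sampling without replacement, the inductive step closes once I verify
\[
\prcond{F^t = F}{T_{i+1}=t,\ \text{marked at time } t = \{F_1,\ldots,F_{i+1}\}} = \frac{w(F)}{\sum_k w(F_k)}.
\]

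The hard part will be this last identity. A naive Bayes calculation shows this conditional probability equals $w(F)\, P(M_{t-1} = \{F_1,\ldots,F_{i+1}\}\setminus\{F\})$ up to normalization, and the subset probabilities depend on $F$ through inclusion--exclusion in the weights, so plain symmetry does not suffice. To handle this I would refine the conditioning by the entire ordered tuple $(T_1,\ldots,T_i;\,F^{T_1},\ldots,F^{T_i})$ of first-marking times and identities, use equation (\ref{prob}) once more to absorb the repeated picks inside each interval $[T_j+1,T_{j+1})$, and then appeal to the Markov property of the single step at time $t$: conditional on this refined past, $F^t$ is drawn from the restriction of $w$ to the unmarked faces, so the desired proportionality drops out directly and the induction closes.
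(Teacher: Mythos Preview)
Your overall architecture---induction on the number of marked faces, conditioning on which face is picked at time $t$, matching against the recursion for sampling without replacement, and invoking equation~(\ref{prob}) to collapse repeated picks---is exactly the route the paper takes.

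The trouble is the identity you flag as ``the hard part'',
\[
\prcond{F^{t}=F}{T_{i+1}=t,\ \text{marked}=\{F_{1},\ldots,F_{i+1}\}}=\frac{w(F)}{\sum_{k}w(F_{k})},
\]
which is simply \emph{false} in general, so no refinement of the conditioning can establish it. Take two faces $F_{1},F_{2}$ of positive weight that happen to be chambers (for instance the two one-block faces in the Tsetlin library on two cards). Then $T_{2}=2$ forces $\{F^{1},F^{2}\}=\{F_{1},F_{2}\}$, and by independence of the picks
\[
\prcond{F^{2}=F_{1}}{T_{2}=2,\ \text{marked}=\{F_{1},F_{2}\}}
=\frac{w(F_{2})w(F_{1})}{w(F_{1})w(F_{2})+w(F_{2})w(F_{1})}=\tfrac{1}{2},
\]
not $w(F_{1})/(w(F_{1})+w(F_{2}))$. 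Since here $C^{2}=F^{2}F^{1}C_{0}=F^{2}$, this gives
$\prcond{C^{2}=F_{1}}{T_{2}=2,\ \text{marked}=\{F_{1},F_{2}\}}=\tfrac{1}{2}$,
whereas $w_{F_{1},F_{2}}(F_{1})=w(F_{1})/(w(F_{1})+w(F_{2}))$. So the Claim itself fails whenever $w(F_{1})\neq w(F_{2})$. Your proposed fix---refining by the ordered tuple $(T_{1},\ldots,T_{i};F^{T_{1}},\ldots,F^{T_{i}})$---cannot help: on that finer event $F^{t}=F^{T_{i+1}}$ is deterministic, and averaging back over the refinement just reproduces the same $\tfrac{1}{2}$.

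For comparison, the paper's own argument writes down this same identity at equation~(\ref{eq}) without comment, so the gap is present there as well; your instinct that this step is the delicate one was correct. The argument does go through unchanged in the equal-weight case (where the identity holds by exchangeability), which is consistent with the known strong stationary time for the unweighted Tsetlin library.
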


It is clear that if the claim is true then $T_1$ is a strong stationary time.
First of all,
$$\prcond{C^t= C}{ \mbox{only set } F \mbox{ is marked at time } t}= \begin{cases} 1, & \mbox{if } F C_0=C
 \\ 0, & \mbox{ otherwise} \end{cases}$$
In order to do induction on $i$, assume that for any $F_1,F_2, \ldots F_{i-1} \in \mathbb{F}$ the following is true for all $t$ that make the condition possible:
 \begin{align*}
&\prcond{C^t= C}{\begin{matrix}
&T_{i-1}=t \\
& \mbox{the marked faces are}\\
& F_1,F_2,\ldots, F_{i-1} 
\end{matrix}} =\\
& \prcond{C^t= C}{\begin{matrix}
&T_{i-1}<t< T_i \\
& \mbox{the marked faces are}\\
& F_1,F_2,\ldots, F_{i-1} 
\end{matrix}} \\
&= w_{F_1,\ldots F_{i-1}}(C)
\end{align*}
 where $ w_{F_1,\ldots F_{i-1}}(C)$ is sampling without 
  replacement from $\{F_1,F_2\ldots F_{i-1} \}$ and applying the faces picked to $C_0$  in the reserve order, keeping track only of the products that will end up in $C$.
 
 Then 
\begin{align*}
&\prcond{C^t= C}{\begin{matrix}
&T_i=t \\
& \mbox{the marked faces are}\\
& F_1,F_2,\ldots, F_i 
\end{matrix}}= \\
& \sum^i_{j=1} \prcond{C^t= C}{\begin{matrix}
&T_i=t \\
& \mbox{the marked faces are}\\
& F_1,F_2,\ldots, F_i \\
& \mbox{face } F_j \mbox{ was chosen at time } t
\end{matrix}}\\
&\prcond{\mbox{face } F_j \mbox{ was chosen at time } t}{\begin{matrix}
&T_i=t \\
& \mbox{the marked faces are}\\
& F_1,F_2,\ldots, F_i 
\end{matrix}}=
\end{align*}
\begin{align}\label{eq}
& \sum^i_{j=1} \frac{w(F_j)}{w(F_1)+w(F_2)+ \ldots + w(F_i)} \prcond{C^t= C}{\begin{matrix}
&T_{i-1}<t \\
& \mbox{the marked faces are}\\
& F_1,F_2,\ldots, F_i \\
& \mbox{face } F_j \mbox{ was chosen at time } t
\end{matrix}}
\end{align}
 The induction step will help to prove that (\ref{eq}) is equal to $ w_{F_1,\ldots F_{i}}(C)$. The previous step configuration $C^{t-1}$ will run through all possible chambers $\overline{C}$ and therefore
\begin{align*}
&\prcond{C^t= C}{\begin{matrix}
&T_{i-1}<t \\
& \mbox{the marked faces are}\\
& F_1,F_2,\ldots, F_i \\
& \mbox{face } F_j \mbox{ was chosen at time } t
\end{matrix}} =\\
&\sum_{ \overline{C}: \mbox{ }F_j\overline{C}=C}  \prcond{C^{t-1}= \overline{C}}{\begin{matrix}
&T_{i-1}<t \\ & \mbox{the marked faces at time }t-1 \mbox{ are}\\
& F_1,F_2,\ldots, F_{i-1} \\
\end{matrix}}= \\
& \sum_{\overline{C}: \mbox{ }F_j\overline{C}=C}  w_{F_1,\ldots F_{j-1}, F_{j+1} \ldots F_{i}}(\overline{C})
\end{align*}
Thus 

\begin{align*}
&\prcond{C^t= C}{\begin{matrix}
&T_i=t \\
& \mbox{the marked faces are}\\
& F_1,F_2,\ldots, F_i 
\end{matrix}}=\\
& \sum^i_{j=1} \sum_{\overline{C}: \mbox{ }F_j\overline{C}=C} \frac{w(F_j)}{w(F_1)+w(F_2)+ \ldots + w(F_i)}w_{F_1,\ldots F_{j-1}, F_{j+1} \ldots F_{i}}(\overline{C})=\\
&w_{F_1, \ldots F_{i}}(C)
\end{align*}

 To complete the proof we need to also prove that the following measure also coincides with $w_{F_1, \ldots F_{i}}(C)$.
\begin{align*}
&\prcond{C^t= C}{\begin{matrix}
&T_i<t<T_{i+1}\\
& \mbox{the marked faces are}\\
& F_1,F_2,\ldots, F_i 
\end{matrix}}=\\
& \sum^i_{j=1}   \frac{w(F_j)}{w(F_1)+w(F_2)+ \ldots + w(F_i)} 
 \prcond{C^t= C}{\begin{matrix}
&T_i<t<T{i+1}\\
& \mbox{the marked faces are}\\
& F_1,F_2,\ldots, F_i \\
&F_j \mbox{ was picked at time }t
\end{matrix}}
\end{align*}
But equation (\ref{prob}) says that 
\begin{align*}
&\prcond{C^t= C}{\begin{matrix}
&T_i<t< T_{i+1}\\
& \mbox{the marked faces are}\\
& F_1,F_2,\ldots, F_i \\
&F_j \mbox{ was picked at time }t
\end{matrix}}=\\
& \sum_{l=0}^{t-i} \prcond{C^t= C}{\begin{matrix}
&T_i<t< T_{i+1}\\
& \mbox{the marked faces are}\\
& F_1,F_2,\ldots, F_i \\
&F_j \mbox{ was picked at time }t\\
& F_j \mbox{ has been picked } l+1 \mbox{ times}
\end{matrix}}\\
\end{align*}
\begin{align*}
& \prcond{ F_j \mbox{ has been picked } l+1 \mbox{ times}}{\begin{matrix}
&T_i<t< T_{i+1}\\
& \mbox{the marked faces are}\\
& F_1,F_2,\ldots, F_i \\
&F_j \mbox{ was picked at time }t
\end{matrix}}=\\
\end{align*}
\begin{align*}
& \sum_{l=0}^{t-i} \prcond{C^{t-l}= C}{\begin{matrix}
&T_i=t-l\\
& \mbox{the marked faces are}\\
& F_1,F_2,\ldots, F_i \\
&F_j \mbox{ was picked at time }t-l\\
\end{matrix}}\\
\end{align*}
\begin{align*}
&\prcond{ \substack{F_j \mbox{ has been picked } l+1 \mbox{ times}}}{\begin{matrix}
&T_i<t< T_{i+1}\\
& \mbox{the marked faces are}\\
& F_1,F_2,\ldots, F_i \\
&F_j \mbox{ was picked at time }t
\end{matrix}}=
\end{align*}
\begin{align*}
  &w_{F_1,\ldots F_{j-1}, F_{j+1} \ldots F_{i}}(C)  \sum_{l=0}^{t-i} \prcond{ \substack{F_j \mbox{ has been} \\ \mbox{picked } l+1 \mbox{ times}}}{\begin{matrix}
&T_i<t< T_{i+1}\\
& \mbox{the marked faces are}\\
& F_1,F_2,\ldots, F_i \\
&F_j \mbox{ was picked at time }t
\end{matrix}} =\\
  &w_{F_1,\ldots F_{j-1}, F_{j+1} \ldots F_{i}}(C) \\
\end{align*}
The last equality is because of induction. Therefore, indeed 

\begin{align*}
&\prcond{C^t= C}{\begin{matrix}
&T_i<t< T_{i+1}\\
& \mbox{the marked faces are}\\
& F_1,F_2,\ldots, F_i 
\end{matrix}}=
\end{align*}
\begin{align*}
& \sum^i_{j=1}   \frac{w(F_j)}{w(F_1)+w(F_2)+ \ldots + w(F_i)} 
 \prcond{C^t= C}{\begin{matrix}
&T_i<t< T_{i+1}\\
& \mbox{the marked faces are}\\
& F_1,F_2,\ldots, F_i \\
&F_j \mbox{ was picked at time }t
\end{matrix}}\\
&= w_{F_1, \ldots F_{i}}(C)
\end{align*}
and this leads to the proof of the claim.
\end{proof}

\section{An improved strong stationary time}\label{second}
This section introduces a strong stationary time for a special case of the Markov chain on hyperplane arrangements. Consider the following definition:
\begin{definition}
Let $F,G$ be two faces and denote by $I_F=\{H_i \in \mathcal{A}: \sigma_{i}(F) \neq 0 \}$. Then $F$ and $G$ are called adjacent if $$I_F=I_G$$
\end{definition}
For example, in the following picture $F_1,F_2$ and $F_3$ are adjacent since $F_1=(+,+,0)$, $F_2=(+,-,0)$ and $F_3=(-,-,0)$ if the coordinates are taken according to $h_1,h_2,h_3$ in order. More specifically
$I_{F_1}=I_{F_2}=I_{F_3}= \{ h_1,h_2\}$.

\begin{figure}[ht!]
\centering
\includegraphics[scale=0.4]{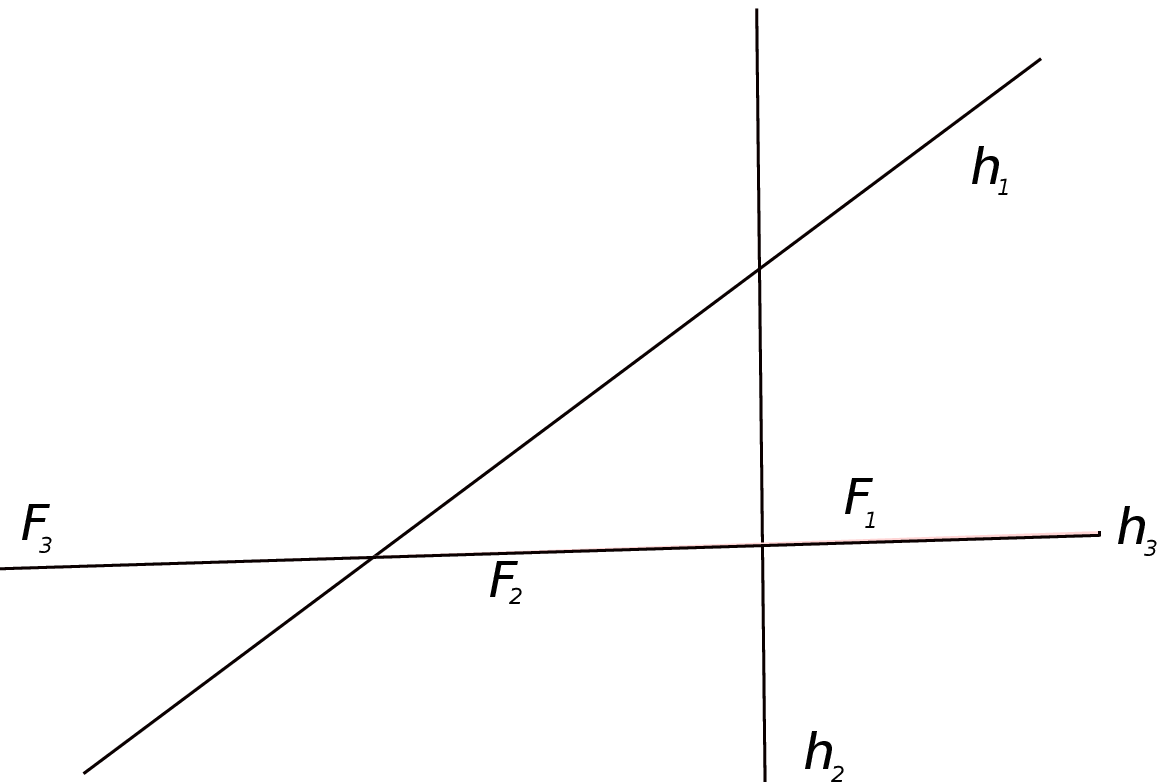}
\label{adj}
\end{figure}

Adjacency is an equivalence relationship and therefore there is a partition of the faces on blocks where each block contains adjacent faces. Each block is determined by the positions of the non-zero coordinates. The weight of the block $B_j$ is $$w(B_j)= \sum_{F \in B_j} w(F)$$ Let $T_2$ be the first time that at least one representative of each positive weighted block been picked. The proof of the fact that $T_2$ is a strong stationary time depends on a second version of the stationary measure, that is introduced in Brown's and Diaconis' section 3 \cite{BD}: 

\begin{remark}\label{descrip}
The stationary measure $\pi$ is the same as sampling faces $F_1, \ldots, F_l$ with replacement  until the outcome of $F_1 F_2 \ldots F_l$ is a chamber. 
\end{remark}
A similar description for the stationary measure is the following:
\begin{lemma}\label{char}
The stationary measure is the same as sampling with replacement from the faces and stop sampling the first time that at least one representative from each block has been picked. 
\end{lemma}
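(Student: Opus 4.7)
The plan is to reduce the lemma to Remark \ref{descrip} in two steps: a deterministic ``absorption'' property of products in the face semigroup, followed by the observation that the stopping rule of the lemma dominates the stopping rule of Remark \ref{descrip}. Once both are in place, the distribution of the product at the two stopping times will be identical.

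First I would record the following absorption property: if $F_1 F_2 \cdots F_k$ is a chamber $C$, then $F_1 F_2 \cdots F_l = C$ for every $l \geq k$. This is immediate from the coordinatewise definition of the product recalled in Section \ref{hyp}: the sign $\sigma_i$ of $F_1 \cdots F_k$ equals $\sigma_i(F_j)$ for the smallest $j$ with $\sigma_i(F_j) \neq 0$, so once every $\sigma_i$ of the partial product is nonzero, no later factor can overwrite any coordinate. (Equivalently, this is an iteration of the deletion property $F G F = F G$ recorded in Section \ref{hyp}.)

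Let $T$ denote the stopping time of Remark \ref{descrip}, namely the first $l$ such that $F_1 \cdots F_l$ is a chamber, and let $T'$ denote the stopping time of the present lemma. I would next verify $T \leq T'$. Two faces lie in the same block exactly when their sets of nonzero coordinates agree, so each block $B$ has an associated subset $I_B \subset \{1, \ldots, m\}$ with the property that $\sigma_i(F) \neq 0$ for every $F \in B$ iff $i \in I_B$. The separating hypothesis on $w$ guarantees that for each hyperplane $H_i$ there is a positively-weighted face $F$ with $\sigma_i(F) \neq 0$, and the block containing this face has positive weight and satisfies $i \in I_B$. At time $T'$ every positively-weighted block has been hit, so for every $i$ some sampled face has nonzero $i$-th coordinate; hence $F_1 \cdots F_{T'}$ is a chamber, forcing $T \leq T'$.

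Combining the two steps gives $F_1 \cdots F_{T'} = F_1 \cdots F_T$ deterministically, so the distribution of $F_1 \cdots F_{T'}$ equals that of $F_1 \cdots F_T$, which is $\pi$ by Remark \ref{descrip}. The main thing to get right is the translation between the language of ``blocks'' and the coordinate description of the semigroup product; once this is pinned down the absorption step is immediate, and I do not expect any real obstacle beyond notational care.
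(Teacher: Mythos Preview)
Your proof is correct. Both your argument and the paper's rest on the same underlying fact --- once a product of faces is a chamber, appending further faces does nothing --- but you package it differently. The paper compares the two measures termwise: it fixes an initial segment $F_{i_1},\dots,F_{i_j}$ at which every block is first represented, then sums over all orderings of the remaining faces to show the tail contributes total conditional probability $1$, so the probability of each such initial segment appears identically as a summand in both $\pi(C)$ and the new measure. You instead isolate the absorption property cleanly, observe that the stopping time $T'$ of the lemma dominates the stopping time $T$ of Remark~\ref{descrip}, and conclude $F_1\cdots F_{T'}=F_1\cdots F_T$ pathwise. Your route is shorter and avoids the explicit marginalization, at the cost of leaning entirely on Remark~\ref{descrip}; the paper's computation is more self-contained but heavier. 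Either way the content is the same.
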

\begin{proof}
Let $q$ be the occurring measure when sampling without replacement from the faces, multiplying in the reverse order and stopping when all blocks have been represented. The goal is to prove that $q=\pi$. The idea is that unnecessary terms can be omitted.

Let $C \in \mathcal{C}$ and $F_{i_1},F_{i_2},\ldots,F_{i_j}, \ldots ,F_{i_l}$ be an ordering of the faces so that  $C=F_{i_1}F_{i_2}\ldots F_{i_j} \ldots F_{i_l}$ and $F_{i_1},F_{i_2},\ldots,F_{i_j}$ is such so that every block is represented but not every block is represented in $F_{i_1},F_{i_2},\ldots,F_{i_{j-1}}$. Then the following term appears $\pi(C) $ as a summand:
$$\sum_{\sigma \in S_{l-j}} \pr{\begin{matrix} 
&  F_{i_{1}} \mbox{was the first face picked} \\ & \ldots \\ & F_{ i_j} \mbox{was the } j^{th} \mbox{ face picked}\\
& \ldots \\
& F_{\sigma (i_l)} \mbox{was the last face picked} 
 \end{matrix} } =$$
$$\sum_{\sigma \in S_{l-j}} \pr{\begin{matrix} 
&  F_{\sigma (i_{j+1})} \mbox{was the }j+1 \mbox{ face picked} \\ & \ldots \\ & F_{\sigma (i_l)} \mbox{was the last face picked} \end{matrix} \middle\vert \begin{matrix} 
&  F_{i_{1}} \mbox{was the first face picked} \\ & \ldots \\ & F_{ i_j} \mbox{was the } j^{th} \mbox{ face picked} \end{matrix} } $$ $$\pr{ \begin{matrix} 
&  F_{i_{1}} \mbox{was the first face picked} \\ & \ldots \\ & F_{ i_j} \mbox{was the } j^{th} \mbox{ face picked} \end{matrix} } =  \pr{ \begin{matrix} 
&  F_{i_{1}} \mbox{was the first face picked} \\ & \ldots \\ & F_{ i_j} \mbox{was the } j^{th} \mbox{ face picked} \end{matrix} } $$
where $\pr{ \begin{matrix} 
&  F_{i_{1}} \mbox{was the first face picked} \\ & \ldots \\ & F_{ i_j} \mbox{was the } j^{th} \mbox{ face picked} \end{matrix} } $ appears as a summand in $q(C)$.  This justifies why $\pi=q$.
\end{proof}

\begin{lemma}
$T_2$ is a strong stationary time.
\end{lemma}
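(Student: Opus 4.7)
The plan is to mimic the inductive structure of the proof of Lemma~\ref{T1}, with blocks taking the role of individual faces. Define $\widetilde{T}_i$ to be the $i$-th time a face from a previously unrepresented positive-weight block is picked, so that $T_2 = \widetilde{T}_m$, where $m$ is the number of positive-weight blocks. The inductive claim to establish is: conditional on $\widetilde{T}_i \le t < \widetilde{T}_{i+1}$ and on the sequence of first-marked blocks $B_{(1)}, \ldots, B_{(i)}$ (listed in order of first appearance), the law of $C^t$ equals a measure $v_{B_{(1)}, \ldots, B_{(i)}}$ constructed as follows: draw a permutation of the marked blocks via sampling without replacement from the weights $w(B)$, independently draw one face $G \in B$ from each marked block $B$ with conditional probability $w(G)/w(B)$, and apply the resulting product in reverse order to $C_0$. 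This is the block-level analog of the measure $w_{F_1, \ldots, F_i}$ used in the proof of Lemma~\ref{T1}.

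The induction on $i$ would proceed exactly as in the proof of Lemma~\ref{T1}. The base case $i = 1$ is immediate, since only one block has been touched. The inductive step splits into two sub-cases: (a) $t = \widetilde{T}_{i+1}$, where a new block is marked, handled by conditioning on which face was just picked and invoking the hypothesis on $C^{t-1}$; and (b) $\widetilde{T}_i < t < \widetilde{T}_{i+1}$, where the picked face lies in an already-marked block. Sub-case (a) is a direct transcription of the first half of Lemma~\ref{T1}'s argument.

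Sub-case (b) is the main obstacle and is the block-analog of equation~(\ref{prob}). The geometric input required is an extension of the deletion property~(\ref{deletion}): for any $F, F' \in B$ in the same block (so they share the same zero pattern) and any intermediate product $G$, one has $F G F' = F G$. Indeed, on the non-zero coordinates of $B$ the leftmost $F$ dictates both sides; on the zero coordinates of $B$ both $F$ and $F'$ contribute nothing, so $G$ governs the signs identically on both sides. This extended deletion lets extra picks from an already-marked block $B_{(j)}$ be ``collapsed'' to the position of $B_{(j)}$'s marked representative; averaging over the $\binom{t-1}{l}$ placements of $l$ extra picks in the interval of length $t-1$, exactly as in the derivation of~(\ref{prob}), shows that the conditional law of $C^t$ depends only on which blocks have been marked and on their order of first appearance, not on the repeat counts.

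Once (b) is established, at $i = m$ all positive-weight blocks are marked and the measure $v_{B_{(1)}, \ldots, B_{(m)}}$ can be identified with $\pi$ via Lemma~\ref{char}: block-sampling without replacement followed by within-block face-sampling factorizes as face-sampling without replacement with weights $w(F)$, and the stopping rule ``every block is represented'' matches between the two descriptions. Averaging over the marked-block sequence with the conditional probability given $T_2 \le k$ then gives $\pr{C^k = C \mid T_2 \le k} = \pi(C)$, establishing that $T_2$ is a strong stationary time.
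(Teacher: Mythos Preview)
Your approach is correct in outline but takes a genuinely different route from the paper. The paper does \emph{not} redo the induction at the block level. Instead, it simply reuses the face-level result already established in Lemma~\ref{T1}: for any set of marked faces $\{F_1,\ldots,F_i\}$ and any $t$ with $T_i\le t<T_{i+1}$, the conditional law of $C^t$ is $w_{F_1,\ldots,F_i}$. Since the event $\{T_2\le t\}$ is exactly the event that the set of marked faces at time $t$ covers every positive-weight block, one conditions further on which faces are marked, applies Lemma~\ref{T1} verbatim, and then invokes Lemma~\ref{char} to identify the resulting mixture with $\pi$. No new induction, no block-level stopping times $\widetilde T_i$, and no extended deletion property are needed.

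Your route rebuilds the whole argument at the block level. The extended deletion identity $FGF'=FG$ for $F,F'$ in the same block is correct and is the right ingredient for that program; it makes the block-level analog of equation~(\ref{prob}) go through. What you gain is a self-contained block-level picture and the explicit measure $v_{B_{(1)},\ldots,B_{(i)}}$; what you lose is brevity, since everything in Lemma~\ref{T1} has to be reproved. One point to tighten: your claim that ``block-sampling without replacement followed by within-block face-sampling factorizes as face-sampling without replacement with weights $w(F)$'' is not literally true (face-sampling without replacement can draw several faces from the same block, whereas your procedure draws exactly one). The identification $v_{B_{(1)},\ldots,B_{(m)}}=\pi$ is still valid, but it should be argued either via the extended deletion property applied inside the description of $\pi$ in Lemma~\ref{char} (collapse repeated block visits to their leftmost representative), or simply by noting that your induction shows the conditional law given $T_2\le t$ is a fixed measure independent of $t$, which by letting $t\to\infty$ must equal $\pi$.
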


\begin{proof}
The goal is to prove that
$$\prcond{C^t=C}{T_2=t}= \pi(C) $$

Let  $ w_{F_1,\ldots F_{i}}(C)$ be sampling without replacement from $\{F_1,F_2\ldots F_i \}$
 and applying the faces picked to $C_0$  in the reserve order, keeping track only of the products that will end up in $C$, just like in the proof of Lemma \ref{T1}. Once more let $T_i$ be the first time a new face is picked. The following relation is proved in the same way as Lemma \ref{T1}:

\begin{align*}
&\prcond{C^t= C}{\begin{matrix}
&T_i<t<T_{i+1}\\
& \mbox{the marked faces are}\\
& F_1,F_2,\ldots, F_i 
\end{matrix}}=\prcond{C^t= C}{\begin{matrix}
&T_i=t\\
& \mbox{the marked faces are}\\
& F_1,F_2,\ldots, F_i 
\end{matrix}}\\
&=w_{F_1,\ldots F_{i}}(C)
\end{align*}

Remark \ref{char} makes it clear that summing over all $F_1,F_2,\ldots F_i$ are such so that for every block has a representative among the $F_j'$s then the following holds $$\pi(C)= \sum_{\substack{F_1,F_2,\ldots F_i } } w_{F_1,\ldots F_{i}}(C)$$

Therefore, if $F_1,F_2,\ldots F_i$ are such so that every block has a representative among them and if at every step the face picked is marked then
\begin{align*}
&\prcond{C^t= C}{\begin{matrix}
&T_2 \leq t\\
& \mbox{the marked faces are}\\
& F_1,F_2,\ldots, F_i 
\end{matrix}}\\
&=w_{F_1,\ldots F_{i}}(C)
\end{align*}

which completes the proof.
%
 
\end{proof}

\section{Proof of Theorem \ref{hyperplane}.}\label{pf}
\begin{proof}
The goal is to bound the right hand side of Lemma \ref{inequality}, which stated that 
$$s(t) \leq P(T_2>t)$$
$T_2$ gives better bounds (or constants) than $T_1$ therefore it is  preferable to use it over $T_2$.
The following union bound argument will help with bounding $P(T_2>t)$.More precisely, let $A^t_i$ be the event that at $t$ steps block $B_i$ hasn't been picked.  Also remember the notation
$w(B_j)= \sum_{F \in B_j} w(F)$.
Then we have that
$$ P(T_2>t) \leq P(\cup^m_{i=1}A^t_i) \leq \sum^m_{i=1}(1-w(B_i))^t$$
which finishes the proof of Theorem \ref{hyperplane}.
\end{proof}

\section{A more specialized, faster strong stationary time}\label{symmetry}
Let $T_3$ is the first time  that the product of faces picked is a chamber. According to Athanasiadis and Diaconis this is a coupling time \cite{AthD}. Assuming some symmetry conditions, this is also a strong stationary time. 

In this section, assume that a group $G$ acts on $V$ preserving the hyperplane arrangement $\mathcal{A}$ so that the action restricted on the chambers is transitive. Assume the symmetry conditions described by equation \ref{condition}. The first lemma concerns the stationary distribution:
\begin{lemma}
Under the symmetry conditions the stationary measure is the uniform measure on the chambers.
\end{lemma}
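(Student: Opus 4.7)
The plan is to exploit uniqueness of the stationary measure, combined with $G$-invariance, to force $\pi$ to be constant on chambers. Since $w$ is assumed to separate, Brown--Diaconis guarantees a unique stationary distribution $\pi$, so it suffices to produce \emph{any} $G$-invariant stationary measure, or equivalently to show that $\pi$ itself must be $G$-invariant; transitivity of the $G$-action on $\mathcal{C}$ then collapses a $G$-invariant probability measure to the uniform one.

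First I would verify that the transition kernel $K$ is $G$-equivariant, i.e., $K(gC, gC') = K(C, C')$ for every $g \in G$ and every $C, C' \in \mathcal{C}$. This uses two ingredients: (i) the geometric action commutes with the face-chamber product, $g(FC) = (gF)(gC)$, which holds because $G$ permutes the hyperplanes and preserves the sign vectors defining $FC$; and (ii) the symmetry hypothesis $w(gF) = w(F)$ from equation (\ref{condition}), so that summing $w(F)$ over $\{F : FC = C'\}$ gives the same value as summing over the $g$-translated set $\{F' : F'(gC) = gC'\}$.

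Next, given $g \in G$, I would define $\pi^g(C) := \pi(g^{-1}C)$ and check directly, using the equivariance of $K$ established above and a change of variables in the sum $\sum_C \pi^g(C) K(C, C')$, that $\pi^g$ is again a stationary probability measure. By the uniqueness half of Brown--Diaconis' theorem, $\pi^g = \pi$ for every $g \in G$, so $\pi$ is constant along $G$-orbits in $\mathcal{C}$. Since $G$ acts transitively on $\mathcal{C}$ by hypothesis, $\pi$ is constant on all of $\mathcal{C}$, hence equals the uniform distribution $\pi(C) = 1/|\mathcal{C}|$.

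The only delicate step is the equivariance check $K(gC, gC') = K(C, C')$; everything else is formal. The potential pitfall there is making sure that the bijection $F \mapsto gF$ actually sends the fiber $\{F : FC = C'\}$ onto the fiber $\{F' : F'(gC) = gC'\}$, which is precisely what the identity $g(FC) = (gF)(gC)$ and invertibility of the $G$-action guarantee. Once that is in place, the uniqueness-plus-transitivity argument is immediate.
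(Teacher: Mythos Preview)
Your argument is correct and is a genuinely different route from the paper's. You invoke only the \emph{uniqueness} of the stationary distribution together with $G$-equivariance of the kernel: once you check $K(gC,gC')=K(C,C')$ via $g(FC)=(gF)(gC)$ and $w(gF)=w(F)$, the pushforward $\pi^g$ is stationary, hence equals $\pi$, and transitivity on chambers forces uniformity. This is clean, structural, and would apply verbatim to any Markov chain whose transition kernel is equivariant under a group acting transitively on the state space.

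The paper instead works directly with the Brown--Diaconis stochastic description of $\pi$ (sample faces without replacement until the reversed product is a chamber). It conditions on the length $l$ at which the product first becomes a chamber and argues, by the symmetry condition, that the conditional probability of hitting any particular chamber $C$ is $1/a$: the $G$-action bijects face sequences landing in $C$ with those landing in $gC$ while preserving their sampling probabilities, so all chambers receive equal mass. What this buys the paper is that the same sampling description and the same symmetry bijection are reused immediately in the proof of the next lemma (that $T_3$ is a strong stationary time), so the computation here doubles as a warm-up. Your approach is more conceptual but does not set up that machinery; you would still need the explicit description, or an analogous equivariance argument at the level of the face-sequence process, when you reach Lemma~\ref{T3}.
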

\begin{proof}
The fact that the stationary distribution $\pi$ is sampling without replacement until the product of the faces picked is a chamber will be the main key to the proof of lemma \ref{T3}. Sample without replacement from the faces, apply the faces to $C_0$ in the reverse order and let $T_3$ denote once more the first time that the product of the faces picked is a chamber. Let $a$ be the number of chambers, then,
$$\pi(C)=\sum_{l} \sum_{F_i \neq F_j} \prcond{F_lF_{l-1}\ldots F_1C_0=C}{T_3=l} \pr{T_3=l}=  \frac{1}{a}$$
which is true because
$$\prcond{F_lF_{l-1}\ldots F_1=C}{T=l}=$$ $$ \frac{\sum_{\substack{F_{i_1}F_{i_2} \ldots F_{i_l} =C\\ F_{i_1}F_{i_2} \ldots F_{i_{l-1}} \notin \mathcal{C} \\ F_i \neq F_j}} \pr{ \substack{ F_{i_1},F_{i_2}, \ldots, F_{i_l} \\ \mbox{ are picked when sampling}\\ \mbox{ without replacement } \\ l \mbox{ times}} }}{\sum_{D \in \mathcal{C}}  \sum_{\substack{F_{i_1}F_{i_2} \ldots F_{i_l} =D\\ F_{i_1}F_{i_2} \ldots F_{i_{l-1}} \notin \mathcal{C} \\ F_i \neq F_j} }\pr{ \substack{ F_{i_1},F_{i_2}, \ldots, F_{i_l} \\ \mbox{ are picked when sampling}\\ \mbox{ without replacement } \\ l \mbox{ times}} }}= \frac{1}{a}$$
because of the symmetry conditions.
\end{proof}

\begin{lemma}\label{T3}
If the symmetry conditions hold then $T_3$ is a strong stationary time.
\end{lemma}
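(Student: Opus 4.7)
The plan is as follows. Since the preceding lemma identifies $\pi$ with the uniform distribution on $\mathcal{C}$, I need to verify
$$\prcond{C^{t}=C}{T_{3}\le t}=\frac{1}{a}$$
for every chamber $C$, where $a=|\mathcal{C}|$. The argument splits naturally into two steps: first, to show that for $t\ge T_{3}$ the chamber $C^{t}$ is a function of the face sequence alone, independent of the initial chamber $C_{0}$; and second, to use the $G$-symmetry of the weight $w$ to deduce that this function's conditional law is uniform on $\mathcal{C}$.

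The first step will rest on an ``absorbing'' property of chambers in the face semigroup. For any chamber $D$ and any face $F$, the definition of $FD$ gives $\sigma_{i}(FD)=\sigma_{i}(F)$ when this is nonzero and $\sigma_{i}(D)$ otherwise; because every $\sigma_{i}(D)$ is nonzero, $FD$ is again a chamber, and for the same reason $DC_{0}=D$ regardless of $C_{0}$. Applying this iteratively, once $F^{T_{3}}\cdots F^{1}$ first becomes a chamber $D_{T_{3}}$, every subsequent product $F^{t}F^{t-1}\cdots F^{1}=F^{t}\cdots F^{T_{3}+1}D_{T_{3}}$ is still a chamber, and
$$C^{t}=F^{t}F^{t-1}\cdots F^{1}C_{0} = F^{t}F^{t-1}\cdots F^{1}.$$
Hence for $t\ge T_{3}$ the chamber $C^{t}$ depends only on $(F^{1},\ldots,F^{t})$, not on $C_{0}$.

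For the second step, I will invoke the hypothesis that $G$ preserves $\mathcal{A}$ together with the weight invariance \eqref{condition}. Any $g\in G$ permutes the hyperplanes and their two sides, so it permutes faces, sends chambers to chambers, and respects the semigroup product, i.e.\ $g(FG)=(gF)(gG)$. Because $w$ is $G$-invariant, the map $(F^{1},\ldots,F^{t})\mapsto(gF^{1},\ldots,gF^{t})$ preserves the joint distribution of the i.i.d.\ face sequence, and the event $\{T_{3}\le t\}$ is $G$-invariant since ``being a chamber'' is $G$-invariant. On that event $g\cdot C^{t}=(gF^{t})\cdots(gF^{1})$, so the conditional law of $C^{t}$ given $T_{3}\le t$ is a $G$-invariant probability on $\mathcal{C}$. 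Transitivity of $G$ on $\mathcal{C}$ then forces this law to be the uniform distribution, which by the previous lemma is $\pi$.

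The main obstacle I anticipate is the bookkeeping in the first step, in particular making precise that the ``absorbing'' property genuinely forces $C_{0}$ to drop out of $C^{t}$ for all $t\ge T_{3}$ and that the product cannot leave $\mathcal{C}$ once it has entered. The symmetry step is conceptually clean once the identity $g(FG)=(gF)(gG)$ is in hand, but this identity deserves a short justification directly from the hypothesis that $G$ preserves $\mathcal{A}$ together with the formula $F=\cap_i H_i^{\sigma_i(F)}$.
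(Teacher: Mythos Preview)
Your argument is correct and rests on the same symmetry idea as the paper, but the packaging differs. The paper proceeds by induction on $t$: it writes the conditional probability $\prcond{C^t=C}{T_3=t}$ (or $T_3\le t$) as an explicit ratio of sums of products of weights and observes that the symmetry condition makes the numerator independent of $C$. You instead isolate the two conceptual ingredients cleanly: first, that the absorbing property of chambers in the face semigroup makes $C^t$ a function of $(F^1,\ldots,F^t)$ alone on $\{T_3\le t\}$ (this is implicit but never articulated in the paper's proof); second, that the map $(F^1,\ldots,F^t)\mapsto(gF^1,\ldots,gF^t)$ is a measure-preserving bijection that fixes $\{T_3\le t\}$ and sends $C^t$ to $gC^t$, so the conditional law is $G$-invariant and hence uniform by transitivity. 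Your formulation has the advantage of making the role of $C_0$-independence explicit and of applying verbatim to any $G$-equivariant stopping time, while the paper's direct ratio computation is shorter but leaves more to the reader.
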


\begin{proof}
Let $a$ be the number of chambers of $\mathcal{A}$. To prove that 
$$\prcond{C^t=C}{T_3=t}= \pi(C) $$
consider at first the case $t=1$ and remember that $w(c)$ denotes the weight of a chamber $C$ when viewed as a face.
$$\prcond{C^1=C}{T_3=1}= \frac{w(C)}{\sum_{D \in \mathcal{C}} w(D)} = \frac{1}{a}$$
where $a$ is the number of chambers. But then because of the symmetry condition:
$$\prcond{C^2=C}{T_3\leq 2}=\frac{\sum_{F_{i_1}F_{i_2}=C } w(F_{i_1}) w(F_{i_2})}{\sum_{D \in \mathcal{C} }  \sum_{ F_{i_1}F_{i_2}=D} w(F_{i_1}) w(F_{i_2})}= \frac{1}{a} $$
and inductively, because of the weight invariant action of $G$. 
$$\prcond{C^t=C}{T_3=t} = \frac{1}{a} $$
\end{proof}

\begin{remark}
The strong stationary time of this section is not a strong stationary time if the symmetry conditions do not hold. To see this,
$$\prcond{C^1=C}{T_3=1}= \frac{w(C)}{\sum_{D \in \mathcal{C}} w(D)} $$
which is not necessarily equal to $\pi(C)$. 
\end{remark}

\section{Proof of Theorem \ref{sym.con}}
\begin{proof}
In this section, we assume that a group $G$ acts on $V$ preserving the hyperplane arrangement $\mathcal{A}$ so that the action restricted on the chambers is transitive. Assume the symmetry conditions described by equation \ref{condition}.

In this case we saw that the first time that the product of the faces picked is a strong stationary time. To bound the separation distance consider another union bound. 
$$P(T_3>t)\leq \sum^m_{i=1}\left(1 - \sum_{\substack{F \in \mathcal{F}\\ \sigma(F) \neq 0}}w(F) \right)^t$$
\end{proof}

\section{Acknowledgements}
I would like to thank  Persi Diaconis for all the comments  and suggestions and all the discussions  that we had concerning this work.

 \bibliographystyle{plain}
\bibliography{hyperplane_arrangements}

\end{document}